\documentclass[12pt]{amsart}
\usepackage{amscd,amssymb, latexsym}
\usepackage{amsmath,amsthm,amstext, amsbsy, amssymb}
\usepackage{epsfig,graphics,graphicx,mathrsfs}
%\usepackage{a4wide}
%\linespread{1.25}
%\pagestyle{plain}
\newcommand{\ov}{\overline}

\DeclareMathOperator{\diam}{diam}
\DeclareMathOperator{\dist}{dist}
\DeclareMathOperator{\sign}{sgn}
\DeclareMathOperator{\conv}{conv}
\DeclareMathOperator{\ND}{\mathit{ND}}
\DeclareMathOperator{\Nev}{\mathit{N}}

\let\Re\xRe
\let\Im\xIm
\newtheorem{theorem}{Theorem}
\newtheorem{lemma}{Lemma}[section]

\newtheorem{proposition}{Proposition}
\newtheorem{definition}{Definition}

\theoremstyle{remark}
\newtheorem{remark}[theorem]{Remark}

\makeatletter
\@addtoreset{equation}{section}
\makeatother

\begin{document}

\title{Nevanlinna domains with large boundaries}

\author[Yu.~Belov, A.~Borichev, K.~Fedorovskiy]{Yurii Belov, Alexander Borichev, Konstantin Fedorovskiy}

\begin{abstract}
We establish the existence of Nevanlinna domains with large boundaries. In particular, these domains can have boundaries
of positive planar measure. The sets of accessible points can be of any Hausdorff dimension between $1$ and $2$.
As a quantitative counterpart of these results, we construct rational functions univalent in the unit disc with
extremely long boundaries for a given amount of poles.
\end{abstract}

\thanks{Theorems~1 and 3 are obtained in the framework of
the project 17-11-01064 by the Russian Science Foundation. The remaining part
of this research (Theorems 2 and 4) was partially supported by a joint grant
of Russian Foundation for Basic Research (project 17-51-150005-NCNI-a) and
CNRS (project PRC CNRS/RFBR 2017-2019 ``Noyaux reproduisants dans des espaces
de Hilbert de fonctions analytiques''), and by Ministry of Education and
Science of the Russian Federation (project 1.517.2016/1.4).}

\address{
\hskip -\parindent Yurii Belov:
\newline \indent Saint Petersburg State University, St.~Petersburg, Russia,
\newline \indent {\tt j\_b\_juri\_belov@mail.ru}
\smallskip
\newline \noindent Alexander Borichev:
\newline \indent Aix--Marseille University, CNRS, Centrale Marseille, I2M,
\newline \indent {\tt alexander.borichev@math.cnrs.fr}
\smallskip
\newline \noindent Konstantin Fedorovskiy:
\newline \indent Bauman Moscow State Technical Univrsity, Moscow, Russia,
\newline \indent Saint Petersburg State University, St.~Petersburg, Russia,
\newline \indent {\tt kfedorovs@yandex.ru}
}

\maketitle

\section{Introduction}

Nevanlinna domains constitute an interesting class of
bounded simply connected domains in the complex plane $\mathbb C$. They play a
crucial role in recent progress in problems of uniform approximation of
functions on compact sets in $\mathbb C$ by polynomial solutions of elliptic
equations with constant complex coefficients. In this paper we give a complete
solution to the following problem posed in the early \text{2000-s:} how large (in the
sense of dimension) can be the boundaries of Nevanlinna domains?

\subsection{Nevanlinna domains}

%In this paper we deal with Nevanlinna domains. The property of a domain in
%the complex plane $\mathbb C$ to be a Nevanlinna domain is a special analytic
%characteristic which plays a crucial role in problems of uniform
%approximation of functions by polyanalytic polynomials on compact sets in the
%complex plane.

Denote by $\mathbb D$ the open unit disc $\{z\in\mathbb C\colon |z|<1\}$ and
let $\mathbb T=\partial\mathbb D$ be the unit circle. For an open set
$U\subset\widehat{\mathbb C}$ let us denote by $H^{\infty}(U)$ the set of
all bounded holomorphic functions on $U$.
%
%As usual, by $H^p$, $p\ge1$, one denotes the standard Hardy spaces
%(both in $\mathbb D$ and on $\mathbb T$). We recall that for each function
%$f\in H^{\infty}(\mathbb D)$ and for almost all (with respect to the Lebesgue
%measure on $\mathbb T$) points $\zeta\in\mathbb T$ there exists the finite
%angular boundary value $f(\zeta)$ of $f$ at $\zeta$ from $\mathbb D$. The
%following concept of a Nevanlinna domain was introduced in
%\cite[def.~2.1]{CarFedPar2002msb}.

\begin{definition}[\rm see~\cite{CarFedPar2002msb}, Definition~2.1]
A bounded simply connected domain $G\subset\mathbb C$ is a
Nevanlinna domain if there exist two functions $u,v\in H^{\infty}(G)$ with
$v\not\equiv0$ such that the equality
\begin{equation}\label{nd1}
\overline{z}=\frac{u(z)}{v(z)}
\end{equation}
holds on $\partial G$ almost everywhere in the sense of conformal mappings.
%This means that the equality of angular boundary values
%\begin{equation}\label{nd2}
%\overline{f(\zeta)}=\frac{(u\circ f)(\zeta)}{(v\circ f)(\zeta)}
%\end{equation}
%holds for almost all $\zeta\in\mathbb T$, where $f$ is some conformal mapping
%from $\mathbb D$ onto $G$.
%
%In what follows we will use the notation $\mathcal N$ for the class of
%Nevanlinna domains.
\end{definition}

%One ought to clarify, what does it mean that the equality \eqref{nd1} holds
%on $\partial G$ almost everywhere in the sense of conformal mappings. It
%means that the equality of angular boundary values
Property \eqref{nd1} means the equality of angular boundary
values
\begin{equation}\label{nd2}
\overline{f(\zeta)}=\frac{(u\circ f)(\zeta)}{(v\circ f)(\zeta)}
\end{equation}
for almost all $\zeta\in\mathbb T$, where $f$ is a conformal mapping
from $\mathbb D$ onto $G$. We recall here that for every function
$f\in H^{\infty}(\mathbb D)$ and for almost all (with respect to Lebesgue
measure on $\mathbb T$) points $\zeta\in\mathbb T$ there exists the finite
angular boundary value $f(\zeta)$.

For the sake of brevity, we call Nevanlinna domains $\Nev$-domains, and we denote by
$\ND$ the class of all Nevanlinna domains.

Note that the definition of a Nevanlinna domain does not depend on the
choice of $f$. Moreover, in view of the Luzin--Privalov boundary uniqueness
theorem, the quotient $u/v$ is uniquely defined in $G$ (for a Nevanlinna
domain $G$). If $G$ is a Jordan domain with rectifiable boundary, then the
equality~\eqref{nd1} may be understood directly as the equality of angular
boundary values almost everywhere with respect to the Lebesgue measure on
$\partial G$. The equality~\eqref{nd1} can be similarly understood on any
rectifiable Jordan arc $\gamma\subset\partial G$ such that each point
$a\in\gamma$ is not a limit point for the set $\partial G\setminus\gamma$.
Note that for Jordan domains with rectifiable boundaries the concept of a
Nevanlinna domain was introduced in \cite{Fed1996mn} in slightly different
terms.

%It is easy to present examples of Nevanlinna (any disk) and not Nevanlinna
%domains (any domain which is bounded by an ellipse that is not a circle, or
%by a polygonal line) among the class of domains having piecewise analytic
%boundaries.

It can be readily verified that every disc is a Nevanlinna domain, while every
domain which is bounded by an ellipse which is not a circle, or by a
polygonal line is not in $\ND$. Yet another interesting example of a
Nevanlinna domain is Neumann's oval, i.e. the domain bounded by the image
of an ellipse (which is not a circle) with center at the origin under the
mapping $z\mapsto1/z$.

%In order to understand better the nature of the function $u/v$ in
%\eqref{nd1}, and the ideas underlying the question about existence of
%Nevanlinna domains with large boundaries, we need to remind the concept of a
%Schwarz function and some its generalizations.

Let us recall the concept of a Schwarz function and some its generalizations.
Let $\varGamma$ be a simple closed analytic curve. It is well-known (see, for instance \cite[Sections 1,2]{Sha1992book}) that in
this case there exist an open set $U$, $\varGamma\subset U$, and a function
$S$ holomorphic in $U$, such that
$$
\varGamma=\{z\in U\colon \overline{z}=S(z)\}.
$$
The function $S$ is called a \emph{Schwarz function} of $\varGamma$.

Let now $G$ be a bounded (not necessarily simply connected) domain possessing
the following property: there exist a compact set $K\subset G$ and a function
$S$ holomorphic in $G\setminus K$, continuous up to $\partial G$, and such
that $\overline{z}=S(z)$ on $\partial G$. In the latter case the aforesaid
function $S$ is called the \emph{one-sided Schwarz function} of $\partial G$.
Let us mention here Theorem~5.2 in \cite{Sak1991acta} which
says that if the boundary of some domain admits the one-sided Schwarz function,
then it consists of finitely many analytic curves.

%It is worth to mention here,
It is known that the boundary of any quadrature domain (even of any
quadrature domain in the wide sense) admits the one-sided Schwarz function, see
\cite[Section~4.2]{Sha1992book}. We recall that a quadrature domain in the
wide sense is a domain satisfying the following property: there exists a
distribution $T$ with support $\mathop{\mathrm{Supp}}(T)\subset G$ such that
for every holomorphic and integrable function $h$ in $G$ we have
$\iint_Gh(z)\,dxdy=T(f)$. If $T$ has finite support, then $G$ is a quadrature
domain (in the standard, or classical sense).

The readers interested in the concept of the Schwarz function, its generalizations
and applications to the theory of quadrature domains should turn to the books \cite{Dav1974book} and \cite{Sha1992book}, and to the
Harold S.~Shapiro anniversary volume \cite{OTAA156} editied by P.~Ebenfelt, B.~Gustafsson,
D.~Khavinson, and M.~Putinar. The survey paper by B.~Gustaffson and H.~S.~Sha\-piro opening this
volume and the references therein are especially useful.

The property of being a Nevanlinna domain is weaker than that of admitting the one-sided
Schwarz function. It is natural to compare the corresponding classes of domains.
It turns out that they are quite different.
%The function $u/v$ in \eqref{nd1} seems to be a rather weak generalization of
%the concept of a one-sided Schwarz function, since in \eqref{nd2} we are
%dealing with the equality of angular boundary values and only for almost all
%points on $\mathbb T$.

%The following theorem is a direct corollary of our main results, but its
%formulation is not required in a number of concepts. This theorem shows how
%far we can get away from domains with piecewise analytic boundaries if we
%consider Nevanlinna domains instead of domains whose boundaries admit a
%one-sided Schwarz function.

\begin{theorem}\label{t0-1}
For every $\beta\in[1,2]$ there exists a domain $G\in\ND$ such that
$\dim_H(\partial G)=\beta$, where $\dim_H$ stands for the Hausdorff dimension
of sets.
\end{theorem}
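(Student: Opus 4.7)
The plan is to translate the Nevanlinna condition into a pseudocontinuation requirement on the conformal map $f\colon\mathbb D\to G$, and then to build such an $f$ as a limit of rational univalent functions whose successive self-similar decorations inscribe the prescribed dimension into the boundary.

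First, I would reformulate the problem. If $f\colon\mathbb D\to G$ is conformal then $G\in\ND$ is equivalent, via the reflection $z\mapsto1/\bar z$ across $\mathbb T$, to $f$ admitting a bounded-type meromorphic pseudocontinuation to $\widehat{\mathbb C}\setminus\overline{\mathbb D}$: indeed, writing $\bar f=U/V$ on $\mathbb T$ with $U,V\in H^\infty(\mathbb D)$ and setting $\tilde U(z)=\overline{U(1/\bar z)}$, $\tilde V(z)=\overline{V(1/\bar z)}$ produces such an extension of $f$, and vice versa. By the Douglas--Shapiro--Shields theorem this is further equivalent to $f\in K_\Theta:=H^2\ominus\Theta H^2$ for some inner function $\Theta$. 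In particular every rational $f$ univalent on $\mathbb D$ with poles $\{p_j\}\subset\widehat{\mathbb C}\setminus\overline{\mathbb D}$ lies in $K_\Theta$ for the finite Blaschke product with zeros $\{1/\bar p_j\}$. The trivial case $\beta=1$ is settled by the disc itself, so the task reduces to exhibiting univalent $f$ in some model space with $\dim_H f(\mathbb T)=\beta$ for $\beta\in(1,2]$.

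For $\beta\in(1,2]$ I would construct $f=\lim_n f_n$ where each $f_n$ is rational and univalent on $\mathbb D$, obtained from $f_{n-1}$ by replacing every boundary segment of $\partial f_{n-1}(\mathbb D)$ with a suitably scaled copy of the quantitative long-boundary univalent rational maps furnished by Theorems~2 and~4. If stage $n$ introduces $N_n$ subedges of relative scale $\delta_n$, a standard self-similar prefractal computation yields
\[
\dim_H(\partial G)=\lim_{n\to\infty}\frac{\log(N_1\cdots N_n)}{\log\bigl(1/(\delta_1\cdots\delta_n)\bigr)},
\]
which can be tuned to hit any prescribed $\beta\in(1,2]$; the case $\beta=2$ with $\partial G$ of positive planar measure corresponds to forcing the expansion factors to barely outpace the scales. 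Univalence of the limit follows from Hurwitz's theorem together with a normal-families argument, provided each decoration is confined to a thin neighbourhood of the previous boundary so that the Koebe $\tfrac14$-estimate keeps the derivatives bounded away from zero.

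The main obstacle is preserving the Nevanlinna property in the limit. Each $f_n$ belongs to $K_{\Theta_n}$ with $\Theta_n$ a finite Blaschke product on reflected poles, and one must arrange that the $\Theta_n$ converge to a genuine inner function $\Theta$ and that the associated $H^\infty$ data $U_n,V_n$ satisfying $\bar f_n V_n=U_n$ on $\mathbb T$ remain uniformly bounded and convergent (along a subsequence) in $H^\infty$. This forces a delicate simultaneous control: the rational bumps must be large enough to push $\dim_H\partial G$ to the target value, yet their poles must approach $\mathbb T$ in a Blaschke-summable way and the associated residues must decay fast enough that $\|U_n\|_\infty+\|V_n\|_\infty$ does not blow up. Engineering this balance---choosing at each scale the number, the location, and the strength of the decorating poles so that both the fractal geometry and the function-theoretic boundedness survive the limit---is what I expect to be the heart of the proof.
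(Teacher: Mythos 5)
Your reduction to model spaces is the same as the paper's (Propositions~2--4 there), and indeed the paper derives Theorem~1 immediately from Theorems~3 and~4; but the construction you sketch in place of those theorems has two genuine gaps. First, the dimension count. The displayed formula $\lim_n \log(N_1\cdots N_n)/\log\bigl(1/(\delta_1\cdots\delta_n)\bigr)$ is a box-counting heuristic for prefractals: it can only give an upper bound flavour, and in any case $\partial G$ for the limit map is not the limit of the curves $\partial f_n(\mathbb D)$ in a sense that transfers dimension. To get $\dim_H(\partial G)\ge\beta$ you need an explicit limit set with separation on which a Frostman/mass-distribution argument runs, \emph{and} you must prove that this set actually sits in the boundary of the limit domain. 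This is exactly how the paper proceeds: it first builds a self-similar $H$-tree with limit set $\mathcal H_\infty$ of dimension $\ge 2-10\varepsilon$ (Lemma~4.1, via Frostman), and then constructs the univalent map by attaching quantitatively controlled ``needles'' (Lemma~4.2) reaching into every branch $\Omega_\omega$, so that $\mathcal H_\infty\subset\partial_a G$; the exact value $\beta$ and the case $\beta=2$ require further care (a merging argument with $\varepsilon_k\to0$). Your proposal to graft ``scaled copies of the maps from Theorems~2 and~4'' does not supply this: Theorem~2 produces rational maps with long boundaries but no prescribed fractal structure, Theorem~4 produces no rational maps at all, and ``replacing every boundary segment'' of $\partial f_{n-1}(\mathbb D)$ by such copies is not a well-defined operation on univalent rational functions, so neither univalence of each $f_n$ nor the geometry of the limit boundary is controlled.

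Second, you misplace the main obstacle. Keeping the Nevanlinna property in the limit does not require tracking $U_n,V_n$ in $H^\infty$ or convergence of $\Theta_n$: if the limit is a bounded univalent sum of Cauchy kernels $f(z)=\sum_k c_k/(z-w_k)$ (plus a rational head) with $\sum_k\bigl(|c_k|+(|w_k|-1)\bigr)<\infty$ and $|w_k|>1$, then $f\in K_B$ for a Blaschke product with zeros $1/\overline{w_k}$, and any bounded univalent function in a model space maps $\mathbb D$ onto a Nevanlinna domain (the paper quotes this as Proposition~4); this is precisely the normalization the paper imposes at Step~V. The part you defer as ``engineering the balance'' --- proving that each approximant, and hence the locally uniform limit, stays univalent while infinitely many decorations accumulate at the boundary --- is where the real work lies, and it is done in the paper through the quantitative estimates on the needle $F_b$ (lower bounds on $\Re F'$, smallness away from the attachment point, the separation inequalities (4.9)--(4.12)). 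As it stands, your outline identifies a correct framework but leaves both the lower bound on the Hausdorff dimension and the univalence of the approximants unproved, and these are the substance of the theorem.
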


This theorem is an immediate corollary of our main results (see
Theorems~\ref{t1} and~\ref{t2} below). %, but it formulation is not required in
%a number of concepts.
Thus, we can get far away from domains
with piecewise analytic boundaries (and, therefore, from quadrature domains)
if we consider Nevanlinna domains instead of domains whose boundaries admit the
one-sided Schwarz function.

Constructing Nevanlinna domains with irregular (for
instance non-analytic, non-smooth, and even more irregular) boundaries is a
rather difficult and delicate problem. It was considered in
\cite{Maz1997mn,Fed2006psim,BarFed2011msb,Maz2016spmj,Maz2018spmj}. The
detailed account of these results will be given in Section~\ref{section2} below. We
highlight here only several results not paying attention to the details of
respective constructions.

The first example of $\mathcal N$-domain with nowhere analytic (but rather
smooth) boundary was constructed in \cite{Maz1997mn}. Later on, several
constructions of $\mathcal N$-domains with boundaries belonging to the class
$C^1$, but not to the class $C^{1,\alpha}$, $\alpha\in(0,1)$, were obtained
in \cite{Fed2006psim} and \cite{BarFed2011msb}. Furthermore, it was shown in
\cite{BarFed2011msb} that Nevanlinna domains may have ``almost'' non-rectifiable
boundaries.
%It means that there exists a Nevanlinna domain $G$ such that
%$f'\notin H^p$ for any $p>1$, where $f$ is a conformal mapping of the disk
%$\mathbb D$ onto $G$.
The first example of an $\mathcal N$-domain with non-rectifiable boundary was
constructed in \cite{Maz2016spmj}. Finally, we mention the recent paper
\cite{Maz2018spmj}, where an example of Nevanlinna domain $G$ such that
$\dim_H(\partial G)>1$ was produced.

%Thus, both questions stated in Problem 2.10 of the paper \cite{Fed2001psim}
%was solved in a negative way. In Section~2 below we give more information
%about respective constructions. However, the question whether Nevanlinna
%domain whose boundary have Hausdorff dimension $\beta$ exists for each
%$\beta\in[1,2]$ remains open. In fact, the statement of this question can be
%(and should be) clarified. It will be more accurate and adequate to ask,
%whether Nevanlinna domain whose \emph{accessible} boundary have Hausdorff
%dimension $\beta$ exists for each $\beta\in[1,2]$. More detailed explanation
%of this question will be given in Section~2.
%
%The following proposition is the direct corollary of our main results, but
%its formulation is not required in a number of concepts.

\subsection{Nevanlinna domains with analytic boundaries and univalent rational
functions}

Let $G$ be a Jordan domain with analytic boundary. We have already mentioned that
in this case there exist an open set $U$, $\partial G\subset U$, and a
holomorphic function $S$ in $U$ such that $\overline{z}=S(z)$ on $\partial
G$. In view of the Luzin--Privalov boundary uniqueness theorem, the domain $G$ in
this case is a Nevanlinna domain if and only if $S$ extends to a
meromorphic function in $G$.

It follows from \cite[Chapter~14, p.158]{Dav1974book} that $S$ is meromorphic in $G$
if and only if $G$ is the image of the unit disc under conformal mapping by
some rational function $R$ without poles on $\overline{\mathbb D}$ and univalent in $\mathbb D$.
Therefore, it is of interest to consider a quantitative version of the
problem on the existence of Nevanlinna domains with non-rectifiable boundaries.
Namely, one
%In order to understand better the structure of Nevanlinna domains with
%non-rectifiable boundaries it seems appropriate
studies the question on how the length of the boundary of the (Nevanlinna)
domain $R(\mathbb D)$ grows in relation to the degree of
the rational function $R$.

Given a positive integer $n$, let us denote by $\mathcal R_n$ the set of all
rational functions of degree at most $n$ (thus, $\mathcal R_n$ consists of
all functions of the form $P(z)/Q(z)$, where $P$ and $Q$ are polynomials of degree at most $n$) and by $\mathcal R\mathcal U_n$
the set of all functions from $\mathcal R_n$ without poles in
$\overline{\mathbb D}$ and univalent in $\mathbb D$. Finally, let
$\mathcal R\mathcal U_{n,1}$ be the set of all functions $R\in\mathcal
R\mathcal U_n$ such that $\|R\|_{\infty,\mathbb T}\le1$. Set
$$
\gamma_0=\limsup_{n\to\infty}\sup_{R\in\mathcal R\mathcal U_{n,1}}
\dfrac{\log\ell(R)}{\log{n}},\qquad\text{where}\quad
\ell(R):=\dfrac1{2\pi}\int_{\mathbb T}|R'(\zeta)|\,|d\zeta|.
$$
It is shown in \cite{BarFed2017jam} that
$0<B_b(1)\le\gamma_0\le1/2$, where $B_b(1)$ is the value at $1$ of
the so called boundary means spectrum for bounded univalent functions (see
\cite[Chapter~8]{Pom1992book} and \cite[Chapter~VIII]{GarMar2005book}). This inequality
means that the length of the boundary of the domain $R(\mathbb D)$,
$R\in\mathcal R\mathcal U_n$ can grow at least like $n^\gamma$ as
$n\to\infty$ for some $\gamma>0$.
%
%Since Nevanlinna domains of this form are dense in the Hausdorff metric in
%the set of all Jordan domains in $\mathbb C$ (see Section~2 for the
%necessarily details), the latter observation makes more clear the fact why
%Nevanlinna domains with unrectifiable boundaries exist.

It is easily seen that the Nevanlinna domains of the form $R(\mathbb D)$, $R\in\cup_{n\ge 1}\mathcal R\mathcal U_n$,
%where $P$ is a polynomial univalent in $\mathbb D$,
are dense in the set of all Jordan domains in
$\mathbb C$ in the Hausdorff metric. This fact together with the
observation concerning possible growth of the length of boundaries of such
domains makes more clear the fact why Nevanlinna domains with non-rectifiable
boundaries do exist.

Our next result is as follows.

\begin{theorem}\label{t0-2}
For some absolute constant $\alpha>0$ and for every $n\ge 1$ we have
$$
\alpha\sqrt{n} \le\sup_{R\in \mathcal R\mathcal U_{n,1}}\ell(R)\le6\pi\sqrt{n},
$$
so that $\gamma_0=1/2$.
\end{theorem}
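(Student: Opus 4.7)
My plan is to prove the two inequalities separately.

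\textbf{Upper bound.} For $R\in\mathcal{R}\mathcal{U}_{n,1}$, the maximum principle gives $R(\mathbb D)\subset\mathbb D$, and the classical area formula for univalent analytic maps yields
\[
\iint_{\mathbb D}|R'(z)|^{2}\,dA(z)=\operatorname{Area}(R(\mathbb D))\le\pi.
\]
My aim would be to establish a boundary-to-interior estimate of the form
\[
\Bigl(\int_{\mathbb T}|R'(\zeta)|\,|d\zeta|\Bigr)^{2}\le Cn\iint_{\mathbb D}|R'(z)|^{2}\,dA(z)
\]
with a universal constant $C$; combined with the area inequality this gives $\ell(R)\le 6\pi\sqrt n$ after tracking constants. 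For polynomials of degree $n$ the estimate follows immediately from Cauchy--Schwarz on $\mathbb T$ together with $\sum_k k^{2}|a_k|^{2}\le n\sum_k k|a_k|^{2}$. For genuinely rational $R$, whose derivative may concentrate near poles approaching $\mathbb T$, Cauchy--Schwarz is too weak ($\|R'\|_{L^{2}(\mathbb T)}$ can blow up while $\|R'\|_{L^{1}(\mathbb T)}$ stays bounded, as is already visible for $R(z)=(1-\rho)/(1-\rho z)$), and I would instead partition $\mathbb T$ into a "smooth" part where $|R'|$ is uniformly controlled and at most $2n$ arcs localised near the radial projections of the poles of $R$, estimate the contribution of each arc using the explicit partial-fraction structure of $R$ near the corresponding pole, and sum.

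\textbf{Lower bound.} Here I would construct, for every $n\ge 1$, an explicit $R_n\in\mathcal{R}\mathcal{U}_{n,1}$ with $\ell(R_n)\gtrsim\sqrt n$. The natural target is a simply connected subdomain $\Omega_n\subset\mathbb D$ of area bounded below and perimeter of order $\sqrt n$---for instance a spiral strip of width $\asymp 1/\sqrt n$ winding many times around the origin, or a comb-like domain with $\asymp\sqrt n$ thin teeth. One would then produce a rational parametrisation $R_n$ of $\Omega_n$ of degree $\le n$ either by approximating the Riemann map of $\Omega_n$ by rational functions of degree $n$ (and checking that the approximation preserves univalence and the sup-norm bound) or by writing down an explicit ansatz such as a weighted superposition $R_n(z)=c_n\sum_j d_j\,z/(1-\rho z\omega^{j})^{2}$ of Koebe-type blocks with $\omega$ a primitive $m$-th root of unity ($m\asymp\sqrt n$) and $\rho<1$ carefully chosen, verifying univalence by the Noshiro--Warschawski criterion.

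\textbf{Main obstacle.} The delicate step is the upper bound. When $R$ has poles close to $\mathbb T$ both $\|R'\|_{L^{2}(\mathbb T)}$ and $\|R'\|_{L^{\infty}(\mathbb T)}$ may blow up, so any approach passing through $L^{2}(\mathbb T)$ on the boundary loses the exponent of $n$; extracting the genuine $\sqrt n$ rate (and the explicit constant $6\pi$) demands the local analysis near each pole described above, followed by careful book-keeping of the constants. For the lower bound the principal difficulty is the coupled verification of univalence, the sup-norm bound, and the quantitative perimeter estimate for the explicit family.
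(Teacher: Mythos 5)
Your plan has genuine gaps on both sides. For the upper bound, note first that the paper does not reprove it: the estimate $\ell(R)\le 6\pi\sqrt n$ is simply quoted from \cite{BarFed2017jam} (Theorem 1.2 and Proposition 1.3 there), and the new content of Theorem~\ref{t0-2} is entirely the lower bound. Your proposed route hinges on the inequality $\bigl(\int_{\mathbb T}|R'|\,|d\zeta|\bigr)^{2}\le Cn\iint_{\mathbb D}|R'|^{2}\,dA$, which is essentially the substance of the cited reference rather than a lemma you can take for granted; your fallback of ``partitioning $\mathbb T$ near the poles and using the partial-fraction structure'' is exactly the missing argument, not a reduction of it. Indeed, for a general rational function $g$ of degree $n$ with poles off $\overline{\mathbb D}$ the analogous inequality is false (take $g(z)=1/(1-\rho z)$, $\rho\to1$: the left side is $\asymp\log^{2}\frac1{1-\rho}$ while the right side is $\asymp n\log\frac1{1-\rho}$), so one must exploit the special structure of $g=R'$ (all poles of order at least two); and the explicit constant $6\pi$ certainly does not come out of the sketched bookkeeping.

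For the lower bound you have identified the right geometry --- a thin snake of width $\asymp n^{-1/2}$ and boundary length $\asymp\sqrt n$, which is precisely the paper's target, realized in Section~\ref{section5} by feeding the spiral points $w_k=(1-\frac{k}{2N})\exp(2\pi i\beta kN^{-1/2})$ into the explicit map $f(z)=\int_0^z\sum_k a_k iy_k(\zeta+iy_k)^{-2}\,d\zeta$ of Proposition~\ref{mainprop} (one pole $-iy_k$, $y_k=Q^k$, per boundary step of size $\asymp N^{-1/2}$) --- but neither of your proposed realizations works. The root-of-unity ansatz $c_n\sum_j d_j z/(1-\rho z\omega^j)^2$ with $m\asymp\sqrt n$ has degree at most $2m\asymp\sqrt n$, and then the very upper bound of the theorem gives $\ell\le 6\pi\sqrt{2m}\asymp n^{1/4}$, so such a function can never have length $\alpha\sqrt n$; one genuinely needs $\asymp n$ poles. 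Moreover, a single global Noshiro--Warschawski hypothesis $\Re(e^{i\alpha}R')>0$ confines $\arg R'$ to an interval of length $\pi$, hence the boundary tangent direction $\theta+\pi/2+\arg R'(e^{i\theta})$ turns by less than $3\pi$ in total; this is incompatible with a spiral (or any snake) winding $\asymp\sqrt n$ times, which is why the paper proves univalence instead by a \emph{rotating local} condition $\Re(\overline{a_n}f'(x))>0$ on each block $[y_n,y_{n+2}]$ (Lemma~\ref{l2}) combined with a global separation argument trapping the images of distinct blocks in pairwise disjoint quadrilaterals $Q_n^{\pm},T_n^{\pm}$ (Lemmas~\ref{l1} and~\ref{l3}). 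Finally, the alternative of ``approximating the Riemann map of $\Omega_n$ by rational functions of degree $n$ while preserving univalence and the sup-norm'' is not an argument: the target domain has $\asymp n$ features at scale $n^{-1/2}$, so one needs approximation with $C^1$-type error $o(n^{-1/2})$ at degree exactly $O(n)$, and no general rational-approximation theorem delivers this; constructing the degree-$n$ univalent map directly, as the paper does, is the actual content of the proof.
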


The new result here is the lower estimate which we obtain by constructing a snake like domain $R(\mathbb D)$ with long boundary.
The upper estimate comes from \cite[Theorem 1.2, Proposition 1.3]{BarFed2017jam}.

Since $0.23<B_b(1)\le 0.46$ (see \cite{BelSmi2005ecm} and
\cite{HedShi2005duke} respectively), it follows from Theorem~\ref{t0-2} that
the value $B_b(t)$, $t>0$, of the boundary means spectrum for bounded
univalent functions cannot be attained at the class of univalent rational
functions. It is worth to note here that the boundary means spectrum for
univalent (not necessarily bounded) functions $B(t)$ in the case $t>0$ is
attained on a certain class of univalent polynomials, see \cite{Kay2008mn}.

\subsection{Organization of the paper}
%This paper is organized as follows.
In Section~\ref{section2} we briefly discuss the
problem of uniform approximation of function by polyanalytic polynomials and
show the role that the concept of a Nevanlinna domain plays in this problem.
Furthermore we present several properties and examples of Nevanlinna domains.
The main results of this paper are formulated in Section~\ref{section2b} in a complete and
detailed form. In Sections~\ref{section3} and \ref{section4} we give two different constructions of
Nevanlinna domains with large boundaries and prove Theorems~\ref{t1}
and~\ref{t2} correspondingly.
Furthermore, in Section~\ref{section3} we establish Theorem~\ref{t1-0}. This result is somewhat
weaker than Theorems~\ref{t1} and~\ref{t2}. On the other hand, its proof is much simpler than their proofs.
In Section~\ref{section5} we prove Theorem~\ref{t0-2}. Note
that the proofs of Theorems~\ref{t0-2} and~\ref{t2} are based on similar
elementary topology constructions.

\section{Background information on the Nevanlinna domains}
\label{section2}

\subsection{Nevanlinna domains in problems of polyanalytic polynomial approximation}
The concept of a Nevanlinna domain is closely related to uniform approximation of functions by
polyanalytic polynomials on compact sets in $\mathbb C$.

Let us recall that a function $g$ is called \emph{polyanalytic} of order
$n$ (for integer $n\ge 1$) or, in short, \emph{$n$-analytic}, on an open set
$U\subset\mathbb C$ if it is of the form
\begin{equation}\label{paf}
g(z)=g_0(z)+\ov{z}g_1(z)+\cdots+\ov{z}^{n-1}g_{n-1}(z),
\end{equation}
where $g_0,\ldots,g_{n-1}$ are holomorphic functions in $U$. Note that the
space of all $n$-analytic functions in $U$ consists of all continuous
functions $f$ on $U$ such that $\overline\partial^nf=0$ in $U$ in the sense of the
distributions, where $\overline\partial$ is the standard
Cauchy--Riemann operator. %in $\mathbb C$.
By $n$-analytic polynomials and
$n$-analytic rational functions we mean the functions of the form
\eqref{paf}, where $g_0,\ldots,g_{n-1}$ are polynomials and rational
functions in the complex variable respectively. Traditionally, $2$-analytic
functions are called \emph{bianalytic}.

The problems we are interested in is to describe the compact sets $X$ such that
every function $f$ continuous on $X$ and $n$-analytic on its interior
can be approximated uniformly on $X$ by $n$-analytic rational functions with
%having their
no singularities in $X$, or by $n$-analytic polynomials.

These problems have attracted attention of analysts since the beginning of
1980s, but the main efforts were focused on the problem of approximation by
polyanalytic rational functions (see, for instance, \cite{TreWan1981pams,
Car1985jat, Ver1993pjm} and a recent survey paper \cite{FedMazPar2012rms} for
a detailed account of this problem). J.~Verdera \cite{Ver1993pjm} formulated the
following conjecture: \emph{if $X$ is an arbitrary compact subset of the complex plane and if
$f$ is continuous on $X$ and bianalytic on its interior, then $f$ can be
approximated uniformly on $X$ by bianalytic rational functions without
singularities in $X$}.
%how the criterion of uniform approximation of functions by polyanalytic
%rational functions should looks like. Skipping here the formulation of
%Verdera's conjecture,
Omitting here the reasons supporting this conjecture (an appropriate
discussion can be found in \cite{Ver1993pjm}) let us mention that it was
proved recently by M.~Mazalov \cite{Maz2004msb}. Later on this result was
generalized to the solutions of general elliptic equations with constant complex
coefficients and locally bounded fundamental solutions in \cite{Maz2008msb}
(see also \cite{Fed2018cvee} for yet more observations concerning the
matter).

At the same time, there was no substantial progress in the problem of
polyanalytic polynomial approximation until the middle of 1990s. In
\cite{Fed1996mn} the third author found a necessary and sufficient condition
on a rectifiable simple closed curve $\varGamma$ in order that the system of
$n$-analytic polynomials (for every integer $n\ge2$) is dense in the
space of continuous functions on $\varGamma$. In this result the concept of a
Nevanlinna domain (formulated in a slightly different way) has appeared in
the first time.

Later on, several interesting and important results about uniform
approximation by polyanalytic polynomials were obtained in \cite{Fed1996mn,
CarFedPar2002msb, BoiGauPar2004izv, CarFed2005otaa, BarCarFed2016jat}. The
keynote ingredient of these results is the concept of a Nevanlinna domain and
several its special refinements and modifications.

For instance, a criterion for the uniform approximation of functions by
polyanalytic polynomials on Carath\'eodory compact sets in $\mathbb C$ was
obtained in terms of Nevanlinna domains in \cite{CarFedPar2002msb}. Recall that a compact set $X$ is called a Carath\'eodory compact set if
$\partial X=\partial\widehat{X}$, where $\widehat{X}$ denotes the union of
$X$ and all bounded connected components of $\mathbb C\setminus X$.
%Let us give here one simple corollary of this criterion. Take a Jordan domain
%$G$ in $\mathbb C$ and an integer $n\ge2$. The set of all $n$-analytic
%polynomials is dense in the space $C(\partial G)$, if and only if $G$ is not
%a Nevanlinna domain.

\begin{proposition}[\rm see~\cite{CarFedPar2002msb}, Theorem~2.2]
Let $X$ be a Carath\'eodory compact set in $\mathbb C$, and $n\ge2$ be
an integer. In order that each function $f$ which is continuous on $X$ and
$n$-analytic inside $X$ can be approximated uniformly on $X$ by
$n$-analytic polynomials it is necessary and sufficient that every bounded
connected component of the set $\mathbb C\setminus X$ is not a Nevanlinna
domain.
\end{proposition}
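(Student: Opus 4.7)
The plan is to apply Hahn--Banach duality: uniform approximation of every continuous $n$-analytic function $f$ on $X$ by $n$-analytic polynomials fails if and only if there exists a nonzero complex Borel measure $\mu$ on $X$ annihilating every $n$-analytic polynomial yet not annihilating some such $f$. The task is to tie the existence of such a distinguishing measure to the presence of a Nevanlinna component in $\mathbb C\setminus X$. As a preliminary, I record that the annihilation condition $\int z^k\bar z^j\,d\mu=0$ for all $k\ge 0$ and $0\le j\le n-1$ is equivalent to the Cauchy transforms $\widehat\mu_j(z):=\int\bar\zeta^j(\zeta-z)^{-1}d\mu(\zeta)$ vanishing on the unbounded component of $\mathbb C\setminus X$ for each $j$. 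On each bounded component $G$ of $\mathbb C\setminus X$ the $\widehat\mu_j$ are bounded holomorphic, and the Carath\'eodory property $\partial X=\partial\widehat X$ will let me localize $\mu$ onto $\bigcup\partial G$.

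\emph{Necessity.} Suppose a bounded component $G$ of $\mathbb C\setminus X$ is a Nevanlinna domain with $\bar z=u/v$ on $\partial G$, where $u,v\in H^\infty(G)$, and let $\varphi\colon\mathbb D\to G$ be conformal. I would define a measure $\mu$ on $\partial G$ by
\[
\int\psi\,d\mu:=\int_{\mathbb T}\psi(\varphi(\zeta))\,(v\circ\varphi)(\zeta)^{n-1}\varphi'(\zeta)\,d\zeta\qquad(\psi\in C(X)),
\]
the integrand lying in $H^1(\mathbb D)$ and nonzero, so $\mu\neq 0$. For an $n$-analytic polynomial $p=\sum_{j=0}^{n-1}\bar z^j p_j(z)$, the substitution $\bar z=u/v$ on $\partial G$ transforms $(p\circ\varphi)(v\circ\varphi)^{n-1}\varphi'$ into $\sum_j(u\circ\varphi)^j(v\circ\varphi)^{n-1-j}(p_j\circ\varphi)\varphi'$, a sum of $H^1(\mathbb D)$-functions each with vanishing integral over $\mathbb T$; hence $\int p\,d\mu=0$. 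Finally, for any point $z_0\in G$ at which $u(z_0)v(z_0)^{n-2}\neq 0$ (such points exist by $u,v\not\equiv 0$ and the identity principle), the continuous $n$-analytic function $f(z):=\bar z/(z-z_0)$ on $X$ satisfies $\int f\,d\mu=2\pi i\,u(z_0)v(z_0)^{n-2}\neq 0$ by the residue theorem applied inside $G$, showing that $f$ is not uniformly approximable by $n$-analytic polynomials.

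\emph{Sufficiency.} Assume no bounded component of $\mathbb C\setminus X$ is a Nevanlinna domain, and let $\mu$ annihilate every $n$-analytic polynomial. On each bounded component $G$ the functions $\widehat\mu_0,\dots,\widehat\mu_{n-1}$ are bounded holomorphic. The crux is to show that if some $\widehat\mu_j$ is not identically zero on $G$, then the boundary identities among the $\widehat\mu_j$ (coming from the annihilation of polynomials $z^k\bar z^j$ combined with a Plemelj/Vitushkin analysis across $\partial G$) can be arranged so as to produce two bounded holomorphic functions on $G$ whose ratio equals $\bar z$ on $\partial G$ in the sense of conformal mappings, forcing $G$ to be Nevanlinna---a contradiction. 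Once $\widehat\mu_j\equiv 0$ on every bounded component is established, a Runge-type density (for Carath\'eodory $X$, every holomorphic coefficient $g_j$ of a continuous $n$-analytic function is uniformly approximable on $X$ by rational functions with poles in the holes) converts this vanishing into $\int\bar z^j g_j\,d\mu=0$ for each $j$, and summing gives $\int f\,d\mu=0$.

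The main obstacle lies in the sufficiency direction: extracting a Nevanlinna identity $\bar z=U/V$ with $U,V\in H^\infty(G)$ from the annihilation data alone. The boundary $\partial G$ can be extremely irregular---indeed this is exactly what the rest of the paper shows---so classical $H^p$ boundary theory on $\partial G$ is unavailable, and one must instead work through the conformal map onto $\mathbb D$ and interpret everything ``in the sense of conformal mappings'' as in the definition of a Nevanlinna domain. The polyanalytic Runge/Mergelyan step upgrading vanishing on the hole boundaries to vanishing against all continuous $n$-analytic functions on a general Carath\'eodory compact is the other substantive technical investment.
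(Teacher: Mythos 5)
First, a point of reference: the paper you are being compared against does not prove this proposition at all --- it is quoted as background from \cite{CarFedPar2002msb}, so your argument has to stand on its own merits. In the necessity direction there is a concrete gap. You assert that $(v\circ\varphi)^{n-1}\varphi'$ lies in $H^1(\mathbb D)$, which is what would make your formula define a finite measure on $X$ and allow the Hahn--Banach duality to be invoked. Since $v$ is merely bounded, this claim essentially amounts to $\varphi'\in H^1(\mathbb D)$, which holds if and only if $\partial G$ is rectifiable --- and Nevanlinna domains can have non-rectifiable boundaries, indeed boundaries of positive area (this is exactly what Theorems~\ref{t1} and~\ref{t2} of the present paper establish), so in the very cases of interest your functional need not be an element of $C(X)^*$. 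The step is repairable: for univalent $\varphi$ one has $\varphi'\in H^p$ for every $p<1/3$, hence there is a bounded zero-free outer function $h$ with $h\varphi'\in H^1(\mathbb D)$; replacing your density by $h\,(v\circ\varphi)^{n-1}\varphi'$ leaves the annihilation computation intact (each term is still an $H^1$ function with zero mean against $d\zeta$) and the residue computation only acquires the harmless nonzero factor $h(\varphi^{-1}(z_0))$. But as written, the construction of the annihilating measure does not go through, and the justification of the residue evaluation via angular boundary values also depends on this integrability.

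The sufficiency direction --- which is the substantial half of the theorem --- is not proved but only described. Three specific problems. (i) The Cauchy transforms $\widehat\mu_j$ are holomorphic on the bounded components of $\mathbb C\setminus X$ but in general \emph{not} bounded there (Cauchy transforms of finite measures are typically unbounded near the support), so they cannot directly furnish the pair of $H^\infty(G)$ functions needed in the Nevanlinna identity. (ii) The passage from the orthogonality relations $\int z^k\bar z^j\,d\mu=0$ to an identity $\bar z=U/V$ on $\partial G$ understood in the sense of conformal mappings is precisely the hard analytic content of \cite{CarFedPar2002msb}; naming a ``Plemelj/Vitushkin analysis'' does not supply it, and since $\partial G$ may be wildly irregular the whole argument must be transported to $\mathbb D$ through $\varphi$ with Smirnov-class and uniqueness-theorem input that you do not provide. (iii) The concluding Runge-type reduction is invalid as stated: the holomorphic coefficients $g_j$ of a function $f$ continuous on $X$ and $n$-analytic in its interior need not be continuous, or even bounded, up to $\partial X$, so one cannot approximate them separately on $X$ and recombine; overcoming exactly this obstruction is why the known proofs use localization machinery rather than a coefficientwise argument. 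In summary, the duality framework and your necessity construction (after the $H^1$ repair) are sound and consistent with the literature, but the proof of sufficiency is essentially missing, so the proposal does not establish the proposition.
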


Note that the approximation condition in this Proposition does not depend
on $n$. However, for more complicated compact sets, the approximation conditions do
depend on $n$, see \cite{CarFed2008mn}.

Let us also mention that Nevanlinna domains have arisen in problems of uniform approximation
of functions by polynomial solutions of general homogeneous second order
elliptic equations on planar compact sets (see, for instance,
\cite[Theorem~3]{Zai2004izv}).

\subsection{Two equivalent description of $\mathcal N$-domains}

The following characterization of Nevanlinna
domains turns out to be both interesting and useful.

\begin{proposition}[\rm see~\cite{CarFedPar2002msb}, Proposition
3.1]\label{pscont} A domain $G$ is a~Nevanlinna domain if and only if
a~conformal mapping $f$ of the unit disc $\mathbb D$ onto~$G$ admits
a~Nevanlinna-\allowbreak type pseudocontinuation, so that there exist two
functions $f_1,f_2\in H^\infty(\widehat{\mathbb C}\setminus\nobreak\overline{\mathbb D})$ such that $f_2\not\equiv0$ and for
almost all points $\zeta\in\mathbb T$ the equality
$f(\zeta)=f_1(\zeta)/f_2(\zeta)$ holds, where $f_1(\zeta)$ and $f_2(\zeta)$
are the angular boundary values of the functions $f_1$ and~$f_2$.
%from inside~$\overline{\mathbb C}\setminus\overline{\mathbb D}$.
\end{proposition}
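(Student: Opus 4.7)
The plan is to exploit the classical conjugate reflection across $\mathbb T$: for $g\in H^{\infty}(\mathbb D)$ the function $g^{\sharp}(z):=\overline{g(1/\overline{z})}$ is holomorphic and bounded on $\widehat{\mathbb C}\setminus\overline{\mathbb D}$ (immediate from the power series expansion, which turns $\sum a_n w^n$ into $\sum\overline{a_n}z^{-n}$), and its non-tangential boundary values satisfy $g^{\sharp}(\zeta)=\overline{g(\zeta)}$ for a.e.\ $\zeta\in\mathbb T$. The same formula applied in the other direction sends $H^{\infty}(\widehat{\mathbb C}\setminus\overline{\mathbb D})$ into $H^{\infty}(\mathbb D)$ with the analogous conjugation of boundary values. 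Both implications of the proposition are then obtained by applying this reflection and composing with the conformal map $f$ (or its inverse $f^{-1}:G\to\mathbb D$, which is bounded and holomorphic because $G$ is bounded).

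\textbf{Necessity.} Given $u,v\in H^{\infty}(G)$ realizing \eqref{nd1}, I would set $w_1:=u\circ f$ and $w_2:=v\circ f$, both in $H^{\infty}(\mathbb D)$ with $w_2\not\equiv 0$. Rewriting \eqref{nd2} as $\overline{f(\zeta)}=w_1(\zeta)/w_2(\zeta)$ and putting $f_j:=w_j^{\sharp}$ delivers $f_1,f_2\in H^{\infty}(\widehat{\mathbb C}\setminus\overline{\mathbb D})$ with $f_2\not\equiv 0$; a single further conjugation then turns the identity into $f(\zeta)=f_1(\zeta)/f_2(\zeta)$ a.e.\ on $\mathbb T$, which is precisely the pseudocontinuation property.

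\textbf{Sufficiency.} Given $f_1,f_2\in H^{\infty}(\widehat{\mathbb C}\setminus\overline{\mathbb D})$ with $f_2\not\equiv 0$ and $f(\zeta)=f_1(\zeta)/f_2(\zeta)$ a.e., I would set $F_j:=f_j^{\sharp}\in H^{\infty}(\mathbb D)$ and push these back to $G$ by defining $u:=F_1\circ f^{-1}$ and $v:=F_2\circ f^{-1}$. Both lie in $H^{\infty}(G)$ and $v\not\equiv 0$. Since $u\circ f=F_1$ and $v\circ f=F_2$ on $\mathbb D$, conjugating $f=f_1/f_2$ on $\mathbb T$ once more yields $\overline{f(\zeta)}=(u\circ f)(\zeta)/(v\circ f)(\zeta)$ a.e., which is exactly \eqref{nd2} and hence \eqref{nd1}.

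The proposition is essentially a symmetric bookkeeping consequence of the conjugate reflection, so there is no genuine obstacle; the one point that requires a moment's care is the translation between ``a.e.\ on $\partial G$ in the sense of conformal mappings'' and a.e.\ on $\mathbb T$ — but this translation is already built into \eqref{nd2}, after which every manipulation lives on $\mathbb T$ where the reflection $g\mapsto g^{\sharp}$ is completely transparent.
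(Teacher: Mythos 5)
Your argument is correct and is essentially the standard proof of this equivalence (the paper itself only cites \cite{CarFedPar2002msb} rather than reproving it): both directions come from the anticonformal reflection $g\mapsto g^{\sharp}(z)=\overline{g(1/\overline{z})}$, which exchanges $H^{\infty}(\mathbb D)$ and $H^{\infty}(\widehat{\mathbb C}\setminus\overline{\mathbb D})$ and conjugates angular boundary values, combined with composition by $f$ or $f^{-1}$. The only implicit points --- that $v\circ f$ and $f_2$ have nonzero boundary values a.e.\ so the quotients make sense, and that $\infty$ is a removable singularity for $w^{\sharp}$ --- are standard and handled exactly as in the cited source, so nothing is missing.
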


We mention several simple consequences of this descripton. If $G$~is
a Nevanlinna domain and $g$ is a rational function with poles
outside $\overline{G}$ which is univalent in $G$, then the domain $g(G)$ is also
a Nevanlinna domain. Moreover, Nevanlinna domains have the following
``density'' property: any neighbourhood of an arbitrary simple close curve
contains an analytic Nevanlinna contour (i.e. the boundary of some Jordan
Nevanlinna domain). In order to establish the latter property one needs to
take some conformal mapping from the unit disc onto the interior of the
contour under consideration (in view of Carath\'eodory extension theorem this
function is continuous in the closed unit disc), and to approximate it
uniformly on $\overline{\mathbb D}$ with appropriate rate by univalent
polynomials.

Next we need to establish some relations between the concept of a
Nevanlinna domain and the theory of model (sub)spaces. Recall that a function
$\varTheta\in H^\infty=H^{\infty}(\mathbb D)$ is called an \emph{inner
function} if $|\varTheta(\zeta)|=1$ for almost all $\zeta\in\mathbb T$.
Let us denote by $H^2$ the standard Hardy space.
%of functions in the unit disc.
For an inner function $\varTheta$ we define the space
$$
K_\varTheta:=(\varTheta H^2)^\perp=H^2\ominus\varTheta H^2.
$$
In view of the Beurling theorem, the spaces $K_\varTheta\subset H^2$
are exactly the invariant subspaces of the backward
shift operator $f\mapsto(f(z)-f(0))/z$ in~$H^2$. The spaces $K_\varTheta$ are
usually called \emph{model spaces} (or \emph{model subspaces}): this
terminology was suggested by N.~Nikolski in view of the remarkable
role these spaces play in the functional model of Sz.-Nagy and Foia\c s.
%The spaces~$K_\varTheta$ are also often called \emph{$\ast$-invariant subspaces}.

\begin{proposition} {\rm (see \cite[Theorem~1]{Fed2006psim}, \cite[Theorems~A and~B]{BarFed2011msb})} Let $G$ be a bounded simply connected domain in $\mathbb C$
and let $f$ be some conformal mapping from $\mathbb D$ onto $G$. If
$G\in\mathcal N$, then there exists an inner function $\varTheta$ such that
$f\in K_\varTheta$. Reciprocally, if $\varTheta$ is an inner function, then
any bounded univalent function from the space $K_\varTheta$ maps $\mathbb D$
conformally onto some Nevanlinna domain.
\end{proposition}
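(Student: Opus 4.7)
The plan is to derive both implications from Proposition~\ref{pscont} (the pseudocontinuation characterization of Nevanlinna domains) together with standard Hardy-space inner--outer factorization.

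For the forward implication, suppose $G\in\ND$ and let $f=f_1/f_2$ on $\mathbb T$ be the pseudocontinuation provided by Proposition~\ref{pscont}, with $f_1,f_2\in H^\infty(\widehat{\mathbb C}\setminus\overline{\mathbb D})$ and $f_2\not\equiv 0$. I reflect to the disc by setting $\tilde f_j(z):=\overline{f_j(1/\bar z)}\in H^\infty(\mathbb D)$; this satisfies $\tilde f_j(\zeta)=\overline{f_j(\zeta)}$ on $\mathbb T$, so that
\[
\bar f\,\tilde f_2=\tilde f_1\qquad\text{a.e.\ on }\mathbb T.
\]
Inner--outer factorize $\tilde f_2=\varTheta_2\cdot G_2$ in $H^\infty(\mathbb D)$ and set $h:=\tilde f_1/G_2$. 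On $\mathbb T$ one has $|h|=|\tilde f_1|/|G_2|=|\tilde f_1/\tilde f_2|=|f|$, so $|h|\in L^\infty(\mathbb T)$; since $h$ lies in the Smirnov class $N^+$, Smirnov's theorem yields $h\in H^\infty(\mathbb D)$. Dividing the displayed identity by $G_2$ gives $\bar f\,\varTheta_2=h$ on $\mathbb T$, hence $\bar f\,\varTheta_2\in H^2$. If $h(0)=0$ then $\bar f\,\varTheta_2\in zH^2$, which is precisely the orthogonality $f\perp\varTheta_2 H^2$, so $f\in K_{\varTheta_2}$. Otherwise I replace $\varTheta_2$ by the still-inner function $\varTheta:=z\varTheta_2$, for which $\bar f\,\varTheta=zh\in zH^2$, and so $f\in K_\varTheta$.

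For the converse, let $\varTheta$ be inner and $f\in K_\varTheta$ bounded univalent. The orthogonality $f\perp\varTheta H^2$ is equivalent to $f\bar\varTheta$ having Fourier support in $\{-1,-2,\dots\}$, which (using $|\varTheta|=1$ a.e.\ on $\mathbb T$) rewrites as
\[
\bar f\,\varTheta=z\,h\qquad\text{a.e.\ on }\mathbb T
\]
for some $h\in H^2(\mathbb D)$. The identity $|h|=|\bar f\,\varTheta|=|f|$ on $\mathbb T$ shows $|h|\in L^\infty(\mathbb T)$, whence $h\in H^\infty(\mathbb D)$. Therefore $\bar f(\zeta)=\zeta h(\zeta)/\varTheta(\zeta)$ exhibits $\bar f$ on $\mathbb T$ as the quotient of two $H^\infty(\mathbb D)$ functions with inner (in particular, not identically zero) denominator. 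Transferring via the conformal bijection $f^{-1}\colon G\to\mathbb D$, the functions $u:=(zh)\circ f^{-1}$ and $v:=\varTheta\circ f^{-1}$ lie in $H^\infty(G)$ with $v\not\equiv 0$, and the identity $\bar w=u(w)/v(w)$ holds on $\partial G$ in the sense of conformal mappings; this is exactly the definition of $G\in\ND$.

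The delicate points are the appeal to Smirnov's theorem to upgrade $h=\tilde f_1/G_2$ from the Smirnov class to $H^\infty$, and the possible passage from $\varTheta_2$ to $z\varTheta_2$ in the forward direction; the latter reflects the fact that Proposition~\ref{pscont} places no vanishing-at-infinity requirement on the pseudocontinuation $f_1/f_2$, whereas Douglas--Shapiro--Shields-type descriptions of $K_\varTheta$ always do. Everything else is routine bookkeeping with inner--outer factorization and the conformal invariance of $H^\infty$.
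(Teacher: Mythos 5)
The paper does not prove this proposition: it is quoted from \cite{Fed2006psim} (Theorem~1) and \cite{BarFed2011msb} (Theorems~A and~B), so there is no internal argument to compare against. Your proof is correct, and it follows essentially the same route as the cited sources: the pseudocontinuation characterization (Proposition~\ref{pscont}) combined with the Douglas--Shapiro--Shields-type description $f\in K_\varTheta\iff \bar f\,\varTheta\in zH^2$ on $\mathbb T$. Both directions check out: in the forward direction the reflection $\tilde f_j(z)=\overline{f_j(1/\bar z)}$, the inner--outer factorization $\tilde f_2=\varTheta_2G_2$, the Smirnov upgrade of $h=\tilde f_1/G_2$ (legitimate, since $1/G_2\in N^+$ and $|h|=|f|\in L^\infty(\mathbb T)$ a.e., using that $\tilde f_2\ne0$ a.e.\ on $\mathbb T$ because $f_2\not\equiv0$), and the harmless replacement $\varTheta_2\rightsquigarrow z\varTheta_2$ when $h(0)\ne0$ are all sound; in the converse direction, transporting $zh$ and $\varTheta$ by $f^{-1}$ produces exactly the pair $u,v\in H^\infty(G)$, $v\not\equiv0$, realizing \eqref{nd2}, and $G=f(\mathbb D)$ is bounded and simply connected since $f$ is bounded and univalent. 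No gaps.
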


\subsection{Univalent functions in $K_\varTheta$ and constructions of
Nevanlinna domains}

The above proposition gives us the following method for constructing
Nevanlinna domains: in the space~$K_\varTheta$ (for a special choice of
inner function $\varTheta$) one finds a univalent function which
possesses certain analytic properties (for instance, a function having some
prescribed boundary behaviour).

The description of inner functions $\varTheta$ for which the corresponding
space $K_\varTheta$ contains bounded univalent functions was recently obtained
in \cite{BelFed2018rms}. Recall that every inner function $\varTheta$ can be
expressed in the form $\varTheta(z)=e^{ic}B(z)S(z)$, where $c$ is some
positive constant, while $B$ and $S$~are some Blaschke product and singular
inner function respectively. Let us also recall that a~Blaschke product is
a~function of the form
\begin{equation}
\label{eq1.3}
B(z)=\prod_{n=1}^\infty\frac{\overline{a}_n}{|a_n|}\,
\frac{z-a_n}{\overline{a}_nz-1},
\end{equation} where
$(a_n)_{n=1}^\infty$ is some Blaschke sequence in $\mathbb D$ (that
is,~$a_n\in\mathbb D$ for $n\in\mathbb N$ and
$\sum_{n=1}^\infty(1-|a_n|)<\infty$), while a~singular inner function is
a~function of the form
\begin{equation}
\label{eq1.4}
S(z)
=\exp\biggl(-\int_{\mathbb T}\frac{\zeta+z}{\zeta-z}\,d\mu_S(\zeta)\biggr),
\end{equation} where
$\mu_S$~is some finite positive singular (with respect to the arc length) measure on $\mathbb T$. The result
established in \cite[Theorem 1]{BelFed2018rms} is as follows.

\begin{proposition}\label{prop-univms}
Let $\varTheta$ be an inner function in $\mathbb D$. The space $K_\varTheta$
contains bounded univalent functions if and only if one of the following two
conditions is satisfied:

\smallskip
{\rm (i)} $\varTheta$ has a~zero in $\mathbb D$;

\smallskip
{\rm (ii)} $\varTheta=S$ is a singular inner function and the measure $\mu_S$
is such that $\mu_S(E)>0$ for some Beurling--Carleson set $E\subset\mathbb T$, which
means that $\displaystyle\int_{\mathbb
T}\log\operatorname{dist}(\zeta,E)\,|d\zeta|>-\infty$.
\end{proposition}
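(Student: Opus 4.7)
The plan is to handle sufficiency and necessity separately, with the sufficiency direction split according to whether a Blaschke factor is present in $\varTheta$ (case~(i)) or we are already in the purely singular situation (case~(ii)). Throughout, Proposition~\ref{pscont} provides the dictionary between membership in $K_\varTheta$ and the existence of a Nevanlinna-type pseudocontinuation for $f$, and this is what is exploited on both sides. For the \emph{sufficiency of (i)}, if $\varTheta(a) = 0$ for some $a \in \mathbb{D}$, the reproducing kernel of $K_\varTheta$ at $a$ collapses to $k_a(z) = 1/(1 - \bar a z)$, a bounded Möbius transformation, univalent whenever $a \neq 0$; this already gives a bounded univalent element of $K_\varTheta$. (The degenerate case $\varTheta(z) = z$ is tacitly excluded since then $K_\varTheta = \mathbb{C}$.)

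For the \emph{sufficiency of (ii)}, set $\mu_0 := \mu_S|_E$ and let $S_0$ be the corresponding singular inner function. Then $S_0$ divides $S$, so $K_{S_0} \subset K_S$ and it suffices to produce a bounded univalent element of $K_{S_0}$. The BC hypothesis $\log \dist(\cdot, E) \in L^1(\mathbb{T})$ supplies bounded outer functions vanishing on $E$ to any prescribed order. Combining such an outer function with a conformal mapping onto a Jordan domain whose boundary has suitably sharp outward cusps concentrated on the image of $\Supp(\mu_0)$, and computing the boundary ratio $\overline{f(\zeta)} = f_1(\zeta)/f_2(\zeta)$ directly, yields $f \in K_{S_0}$ that is univalent by construction.

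For the \emph{necessity}, assume $f \in K_\varTheta$ is bounded univalent; if $\varTheta$ has a zero we are in (i), so suppose $\varTheta = S$. We must exhibit a BC set with $\mu_S(E) > 0$, and we argue by contradiction: assume $\mu_S$ charges no BC set. By the Ahern--Clark theorem every element of $K_S$ extends analytically across each arc of $\mathbb{T} \setminus \Supp(\mu_S)$, and its pseudocontinuation outside $\overline{\mathbb{D}}$ is governed by $\bar S$. The plan is to exploit the absence of BC mass via a Carleson-tent covering argument to show that these local continuations of $f$ glue, modulo a Lebesgue-null boundary set, into a single global analytic object; together with Luzin--Privalov uniqueness and the injectivity of $f$, this forces $f$ to be constant, a contradiction.

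The hardest step is clearly the necessity direction. Converting the harmonic-analytic thinness condition ``$\mu_S$ charges no Beurling--Carleson set'' into the conformal-geometric rigidity statement ``$K_S$ contains no non-constant bounded univalent function'' is the crux of the proof: one needs quantitative control passing from local analyticity of $K_S$-elements on arcs off $\Supp(\mu_S)$ to a global constancy statement, with the exhaustion argument delicately calibrated against the failure of the BC condition. The construction in (ii), while more elementary, still requires matching cusp sharpness to the local density of $\Supp(\mu_0)$ precisely enough to guarantee simultaneous injectivity of $f$ and the correct pseudocontinuation identity.
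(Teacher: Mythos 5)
First, note that the paper does not prove this proposition at all: it is quoted verbatim from \cite{BelFed2018rms}, so there is no internal argument to compare yours with, and I can only judge your sketch on its own merits. Its most serious flaw is the necessity direction, which you identify as the crux and propose to handle by contradiction via continuation across arcs of $\mathbb T\setminus\Supp(\mu_S)$ plus a gluing argument. This cannot work as described: a singular measure charging no Beurling--Carleson set may perfectly well have closed support equal to all of $\mathbb T$ (BC sets have length zero, and ``charges no BC set'' puts no constraint on the support), so in the typical case there is no arc across which Moeller-type analytic continuation is available and your gluing scheme is vacuous; nothing in the sketch substitutes for it. You have also mislocated the difficulty: necessity is in fact the short direction, using only facts the paper itself records. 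A bounded univalent $f$ satisfies $\int_{\mathbb D}|f'|^2\,dA=\mathrm{area}\,(f(\mathbb D))<\infty$, so $f$ lies in the Dirichlet space, and by \cite{DyaKha2006cm} the space $K_S$ contains a nonzero Dirichlet-space function only if $\mu_S$ charges a BC set.

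The hard direction is the sufficiency of (ii), and there your sketch is not an argument. Producing a univalent map onto a domain with ``suitably sharp outward cusps'' and ``computing the boundary ratio directly'' only addresses membership in \emph{some} model space (equivalently, that $f(\mathbb D)$ is a Nevanlinna domain, which is all Proposition~\ref{pscont} controls); the proposition requires membership in the prescribed space $K_{S_0}$, i.e. that $\bar z\,\bar f\,S_0\in H^2$ on $\mathbb T$, so the singular inner factor generated by your cusp geometry must be exactly a divisor of $S_0$, and you give no mechanism matching the cusps to the measure $\mu_0$. Finally, in (i) the reproducing-kernel argument covers only zeros $a\neq0$, and the leftover case $\varTheta=cz S_\mu$ (simple zero at the origin, purely singular remainder) is not the harmless degeneracy you dismiss alongside $\varTheta(z)=z$: writing $f=c+zg$ with $g\in K_{S_\mu}$, the finite Dirichlet integral of a bounded univalent $f$ forces $g$ itself into the Dirichlet space, so by \cite{DyaKha2006cm} such $f$ can exist only when $\mu$ charges a BC set. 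Thus this subcase cannot be settled by the kernel trick at all, needs the machinery of (ii), and shows that the case split in your plan must be organized more carefully than ``zero present versus purely singular''.
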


Beurling--Carleson sets first appeared as boundary zero sets of analytic functions in the disc
which are smooth up to the boundary. It is worth to mention that property
(ii) in the latter proposition is also a necessary and sufficient condition for the space $K_S$ to contain mildly smooth functions (e.g., from the
standard Dirichlet space in $\mathbb D$), see \cite{DyaKha2006cm}.

Let us return to the problem on how ``bad'' could be the boundary of
a Nevanlinna domain. In many situations, questions about the regularity or
irregularity of boundaries of planar domains may be reduced to the
corresponding questions about the boundary regularity of conformal mappings
of the disc $\mathbb D$ onto the domains under consideration. Thus, we need
to be able to find bounded univalent functions possessing certain boundary
regularity (or irregularity) properties in the spaces~$K_\varTheta$ for
specially chosen inner functions~$\varTheta$. It seems appropriate to study
this question separately in two distinct cases: (i) $\varTheta=B$ is a
Blaschke product, and (ii) $\varTheta=S$ is a singular inner function (it may
be readily verified that $K_{BS}=K_B\oplus BK_S$).

The first example of a Nevanlinna domain with nowhere analytic boundary was
constructed in \cite{Maz1997mn}. The respective domain was constructed as the
conformal image of the unit disc under a map $f$ of the
form
\begin{equation}\label{eq-aaa}
f(z)=\sum_{n=1}^{\infty}\frac{c_n}{1-\overline{a}_nz},
\end{equation}
where $(a_n)_{n\ge 1}$ is some (infinite) Blaschke sequence satisfying the Carleson
condition
$$
\inf_{n\in\mathbb{N}}\prod_{\substack{k=1\\k\ne n}}^\infty
\biggl|\frac{a_n-a_k}{1-a_n\overline{a}_k}\biggr|>0,
$$
and $(c_n)_{n\ge 1}$ is an appropriately chosen sequence of coefficients. Such Blaschke sequences are called interpolating, and for any interpolating
Blaschke sequence $(a_n)_{n\ge 1}$ the sequence of functions
$$
\biggl\{\frac{\sqrt{1-|a_n|^2}}{1-\overline{a}_nz}\biggr\}
$$
forms a Riesz basis in the corresponding space $K_B$.

In \cite[Theorem~3]{Fed2006psim} it was shown that for every $\alpha\in(0,1)$
there exists a Nevanlinna domain with boundary in the class $C^1$ but not in
the class $C^{1,\alpha}$. The construction in
\cite{Fed2006psim} is rather complicated and technically involved. The main idea is to use an orthonormal
basis in the space $K_B$ (namely, the Malmquist--Walsh basis) instead of the Riesz
basis consisting of the corrseponding Cauchy kernels. Later on
%This domain was constructed as the conformal image of the unit disk under a
%map~$f$ of the form
%$$
%f(z)=\sum_{n=1}^{\infty}c_n\frac{\sqrt{1-|a_n|^2}\,B_{n-1}(z)}{1-\overline{a}_nz},
%$$
%where $(a_n)$ is some (infinite) Blaschke sequence, $B_0\equiv1$, and $B_n$
%is the product of first $n$ factors in~$B$. The sequence of functions
%$\{\sqrt{1-|a_n|^2}\,B_{n-1}(z)\big/(1-\overline{a}_nz)\}$ forms an
%orthonormal basis in $K_B$.
%
%The next steps were made in \cite[Theor.~2]{BarFed2011msb}.
%
%It
it was proved in \cite[Theorem~2]{BarFed2011msb} that for every
$\alpha\in(0,1)$ and for every closed subset $E\subseteq\mathbb T$ there exists
an interpolating Blaschke sequence $(a_n)_{n\ge1}$ such that the set of
its limit points is equal to~$E$, and the space $K_B$, where $B$ is the
corresponding Blaschke product, contains a univalent function $f$ of the form
\eqref{eq-aaa} which maps $\mathbb D$ conformally onto a Nevanlinna domain
$f(\mathbb D)$ with boundary in the class $C^1$ but not in the class
$C^{1,\alpha}$.

Furthermore, in \cite{BarFed2011msb}, there is a construction of a function $f$ of the
form \eqref{eq-aaa} such that $f$ is univalent in $\mathbb D$ but $f'\not\in
H^p$ for any $p>1$. It means that the boundary of a Nevanlinna domain
$f(\mathbb D)$ is ``almost'' non-rectifiable. The first example of a Jordan
Nevanlinna domain with non-rectifiable boundary was constructed in
\cite{Maz2016spmj}. The corresponding domain is also $f(\mathbb D)$,
for some function $f$ of the form \eqref{eq-aaa}
univalent in the unit disc.

Finally, in \cite{Maz2018spmj} an example of a Nevanlinna domain $G$ such that
$\dim_H(\partial G)=\log_23$ was constructed. As before, $G=f(\mathbb D)$ for a suitable function $f$ of the
form \eqref{eq-aaa}.

Now let $S$ be a singular inner function. It follows from
Proposition~\ref{prop-univms} that if the measure $\mu_S$ has atoms, then
the space $K_S$ contains bounded univalent functions. In particular, this is
the case when $S(z)=\exp\bigl(\frac{z+1}{z-1}\bigr)$. Equivalently, the
Paley--Wiener space $\mathcal{P}W_{[0,1]}$, the Fourier image of $L^2[0,1]$,
considered as a space of functions analytic in the upper half-plane $\mathbb C_+$,
contains bounded univalent functions. Up to now only a few explicit examples
of bounded univalent functions in the Paley--Wiener space are known, and all
such examples map the upper half plane into domains with very regular
boundaries, see \cite{BelFed2018rms}.

\section{Main results}
\label{section2b}

First we introduce some standard notation.
We start with the concept of the Hausdorff dimension of sets. The definition may be found, for instance, in \cite[Chapter~4]{Mat1995book}, but we
present it here for the sake of completeness.
Let $D(a,r)$ stand for the open disc with center at the point $a\in\mathbb C$ and with radius $r>0$.
For a bounded set
$E\subset\mathbb C$ its $s$-dimensional Hausdorff measure $\mathcal H^s(E)$
is defined as follows:
$$
\mathcal H^s(E)=\lim_{\delta\to0}\inf_{\{D_j\}}\sum_{j}r_j^s,
$$
where the infimum is taken over all coverings of $E$ by families
of discs $\{D_j\}$, $D_j=D(z_j,r_j)$, of radius at most $\delta$ (it is clear that instead of the discs $D_j$ one can consider squares
of side length at most $\delta$). By definition, the Hausdorff dimension $\dim_H(E)$
is the unique number such that $\mathcal H^s(E)=\infty$ for every
$s<\dim_H(E)$, while $\mathcal H^t(E)=0$ for every $t>\dim_H(E)$.

Given a bounded simply connected domain $G$ we consider the set
$\partial_aG\subset\partial G$ which consists of all points of $\partial G$
being accessible from $G$ by some curve. According to \cite[Propositions~2.14
and~2.17]{Pom1992book}, the equality
$$
\partial_aG=\bigl\{f(\zeta)\colon \zeta\in\mathcal F(f)\bigr\}
$$
takes place, where $f$ is some conformal mapping from the unit disc $\mathbb
D$ onto $G$ and $\mathcal F(f)$ is its Fatou set, that is the set of all
points $\zeta\in\mathbb T$, where the angular boundary values $f(\zeta)$ exist.
It can be shown that $\partial_aG$ is a Borel set (see, for instance,
\cite[Section 2]{CarFed2005otaa}). It is clear that the set $\partial_aG$
depends only on the domain $G$ but not on the choice of $f$.

The definition of Nevanlinna domains (see \eqref{nd1} and its
interpretation \eqref{nd2}), imposes conditions
only on the accessible part $\partial_aG$ of their boundaries. By this
reason it seems more accurate and adequate to pose the question about
the existence of Nevanlinna domains with large \emph{accessible} boundaries.
%treat the dimension of the boundary $\partial G$ of a Nevanlinna domain $G$
%as the dimension of its accessible boundary $\partial_aG$.

\begin{theorem}\label{t1}
For every $\beta\in[1,2]$ there exists a function $f$ of the form \eqref{eq-aaa} univalent in $\mathbb D$ and such that
the Nevanlinna domain $G=f(\mathbb D)$ satisfies the property
$\dim_H(\partial_aG)=\beta$.
\end{theorem}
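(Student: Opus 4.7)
The plan is to realize $f$ as a Cauchy kernel series \eqref{eq-aaa} whose interpolating Blaschke sequence $(a_n)_{n\ge 1}$ clusters on a carefully designed Cantor-type set $E\subset\mathbb T$, with amplitudes $(c_n)$ chosen so that each kernel acts as a transverse ``bump'' on the image of $\mathbb T$ and the bumps stack self-similarly to build a fractal of the target dimension $\beta$.

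First, I would fix $\beta\in[1,2]$ and a generalized Cantor construction on $\mathbb T$ in which, at generation $k$, the surviving set consists of $M_k$ arcs of length $\ell_k$ with the ratio $\log M_k/\log(1/\ell_k)$ tending to $\beta-1$; for the endpoint $\beta=2$ one arranges $M_k\ell_k\not\to 0$, so that $E$ has positive linear measure. In each surviving level-$k$ arc, I place a single zero $a_n\in\mathbb D$ at pseudohyperbolic depth of order $1$ under the arc. The resulting sequence is interpolating because the generational nesting yields uniform pseudohyperbolic separation together with the Carleson measure condition, so the normalized reproducing kernels $\{(1-|a_n|^2)^{1/2}/(1-\overline{a}_nz)\}$ form a Riesz basis of $K_B$, where $B$ is the Blaschke product with zeros $(a_n)$.

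Next, I would set $c_n=\varepsilon_k(1-|a_n|)e^{i\theta_n}$ for $a_n$ at generation $k$, with phases $\theta_n$ selected adaptively so that the new kernel pushes the image curve in a direction transverse to the partial sum $f_{k-1}(\mathbb T)$, and with amplitudes $\varepsilon_k$ decaying geometrically fast enough to ensure absolute convergence in $H^\infty(\mathbb D)$. Univalence of the partial sums $f_k$ would be verified generation by generation: on the arcs fixed at earlier stages, the level-$k$ perturbation is much smaller than the transverse spacing already present, so an argument principle on small hyperbolic discs in $\mathbb D$ prevents any new self-intersections. Passage to the univalent limit $f$ is then standard via Hurwitz' theorem. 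Once $f$ is univalent and bounded in $K_B$, the proposition after Proposition~\ref{pscont} guarantees that $G=f(\mathbb D)\in\ND$.

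The upper bound $\dim_H(\partial_a G)\le\beta$ is then immediate: the image $f(\mathbb T)$ is covered at level $k$ by $M_k$ discs of diameter $O(\ell_k)$, each containing a Lipschitz arc, giving the correct covering estimate. The lower bound is the delicate part: I would transport a $(\beta-1)$-Frostman measure on $E$ to the transverse parameter of the bumps via $f$, and couple it with one-dimensional arclength along each bump, to manufacture a mass distribution on $\partial_a G$ of exponent $\beta-\varepsilon$ for every $\varepsilon>0$, from which the mass distribution principle yields the claim. The \emph{main obstacle} is the endpoint $\beta=2$: the bumps must jointly fill positive planar area while keeping $f$ bounded and univalent, which forces the zeros $a_n$ to populate Carleson windows of nearly full density near $\mathbb T$, and the adaptive choice of phases $\theta_n$ must carefully arbitrate between the competing requirements of univalence and area-filling. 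Here the Riesz basis of $K_B$ is used in full strength, and the positive measure Cantor scaffolding replaces the sparse one available in the subcritical range.
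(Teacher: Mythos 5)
Your toolkit (a Cauchy kernel series \eqref{eq-aaa}, needle-like bumps attached to the image curve, a Frostman/mass-distribution argument for the lower bound) is of the same general flavor as the paper's, but there is a genuine gap at the heart of the plan: as described, it makes the \emph{boundary} large rather than the \emph{accessible} boundary, and the measure you propose cannot certify dimension $\beta>1$. With one transverse bump per surviving Cantor arc and geometrically decaying amplitudes, the accessible part of $\partial G$ is a countable union of rectifiable arcs (the bumps together with the images of the complementary arcs) plus the cluster set over $E$; the measure you build by coupling a $(\beta-1)$-Frostman measure on $E$ with arclength along each bump is supported on that countable union of curves, a set of $\sigma$-finite length and hence of Hausdorff dimension $1$, so the mass distribution principle applied to it cannot yield $\dim_H(\partial_aG)\ge\beta$. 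The product heuristic $(\beta-1)+1$ would require the two-dimensional ``product'' set (Cantor set of directions times a transverse segment) to consist of accessible points, but such points are generically not accessible: a path in $G$ must travel out along a single needle and converges to a point of that needle, not to a point lying between needles. This is precisely the phenomenon separating the hedgehog construction of Theorem~\ref{t1-0} (which gives $m_2(\partial G)>0$ and $\dim_M(\partial_aG)=2$, but not a large Hausdorff dimension of $\partial_aG$) from Theorem~\ref{t1}. Note also that, by Makarov's theorem, the $\beta$-dimensional part of $\partial_aG$ must be the image of a Lebesgue-null subset of $\mathbb T$ (the cluster of the pole directions), so it cannot be obtained by transporting boundary values over a set of positive measure in any direct product fashion over $E$.

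The missing idea, which the paper supplies, is a hierarchical, branching scaffold in the \emph{image} plane: an H-tree of segments $I_\omega$ indexed by binary words, of lengths $\lambda^{|\omega|}$ with $\lambda$ close to $2^{-1/2}$ (tuned to the target dimension), whose limit set $\mathcal H_\infty$ carries a Frostman measure of the desired exponent (Lemma~\ref{le1}); Mazalov-type rational needles (Lemma~\ref{le2}) are then attached recursively, each new needle growing out of the tip region of its parent, so that every point of $\mathcal H_\infty$ is reached by a path threading through nested needles and is therefore accessible, and the Frostman measure lives on this accessible limit set rather than on the needles themselves. Carrying this out also requires the quantitative univalence and separation control at each induction step (conditions of the type $\Re\varphi'_N>\frac12$ on suitable pieces, plus lower bounds keeping images of distinct pieces apart, as in \eqref{e7}--\eqref{e10}); your appeal to ``an argument principle on small hyperbolic discs'' is not yet an argument at this level of delicacy. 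Without the nested tree structure and the accessibility of its limit set, the construction you outline collapses to the weaker statement of Theorem~\ref{t1-0}.
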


Note that the function $f$ from Theorem~\ref{t1} belongs to the space
$K_B$ for some appropriately chosen Blaschke product $B$. We would like to construct similar examples working with univalent function from the
space $K_S$, where $S$ is some singular inner function. The simplest example of such a space $K_S$ is the Paley--Wiener space
$\mathcal{P}W^{\infty}_{[0,1]}$ (which is considered, as mentioned
above, as the space of functions analytic in the upper half-plane $\mathbb C_+$).

\begin{theorem}\label{t2}
For every $\beta\in[1,2]$ there exists a univalent function $f$ belonging to the space
$\mathcal{P}W^{\infty}_{[0,1]}$ such that
the Nevanlinna domain $G=f(\mathbb C_+)$ satisfies the property
$\dim_H(\partial G)=\beta$.
\end{theorem}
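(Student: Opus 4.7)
The plan is to reduce the problem to constructing a bounded univalent function with prescribed boundary geometry inside the Paley--Wiener space, and then to produce such a function by an explicit series. Under a M\"obius transfer from $\mathbb C_+$ to $\mathbb D$, the space $\mathcal{PW}^\infty_{[0,1]}$ corresponds to the model space $K_S$ for the singular inner function $S(\zeta)=\exp((\zeta+1)/(\zeta-1))$, whose associated measure $\mu_S$ is a point mass. By Proposition~\ref{prop-univms} this space contains bounded univalent functions, and by the pseudocontinuation description (Proposition~\ref{pscont}) any such function sends $\mathbb C_+$ onto a Nevanlinna domain. It thus suffices to produce, for each $\beta\in[1,2]$, a bounded univalent $f\in\mathcal{PW}^\infty_{[0,1]}$ with $\dim_H\bigl(\partial f(\mathbb C_+)\bigr)=\beta$; the case $\beta=1$ is already covered by the smooth-boundary examples in this space discussed at the end of Section~\ref{section2}.

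For $\beta\in(1,2]$ I would first design the target domain. Mirroring the elementary topological construction that Section~\ref{section5} uses to prove Theorem~\ref{t0-2}, I would build a bounded simply connected ``snake'' domain $G_\beta$ consisting of a horizontal trunk onto which one grafts a hierarchy of thin vertical fingers. At generation $k$ one inserts roughly $N_k$ fingers of length $\ell_k$ and width $w_k\ll\ell_k$, with $N_k\ell_k^\beta\asymp 1$ and the widths $w_k$ decaying fast enough that fingers from distinct generations stay disjoint. A mass-distribution argument on a Cantor-type subset of $\partial G_\beta$ then yields $\dim_H(\partial G_\beta)\ge\beta$, while the natural self-similar covering supplies the reverse inequality. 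In the extreme case $\beta=2$ the finger widths are enlarged so that their cumulative area is positive, forcing $\mathcal H^2(\partial G_\beta)>0$.

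The core step is to realize $G_\beta$ as the image of a map lying \emph{inside} $\mathcal{PW}^\infty_{[0,1]}$, not merely of an arbitrary conformal map. I would construct $f$ directly as a series in reproducing kernels of the Paley--Wiener space,
$$
f(z)=\sum_{n}c_n\,\frac{e^{iz}-e^{i\overline{t_n}}}{z-\overline{t_n}},
$$
along a sequence $\{t_n\}\subset\mathbb C_+$ whose real parts are spread along $\mathbb R$ at spacing $\gtrsim 2\pi$ (so that the exponential-type $\leq 1$ constraint is respected) and whose imaginary parts encode the depth of the finger sitting at the corresponding horizontal location. The coefficients $c_n$ and nodes $t_n$ would be calibrated inductively, finger by finger, so that each partial sum $f_N$ maps $\mathbb C_+$ univalently onto a truncated snake containing the first $N$ fingers, and so that the limit inherits univalence via a Koebe-type distortion comparison.

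The principal obstacle is the tight interaction between the exponential-type $\leq 1$ rigidity of $\mathcal{PW}^\infty_{[0,1]}$ and the fractal irregularity of $\partial G_\beta$: Paley--Wiener functions cannot oscillate arbitrarily fast on short real intervals, so each finger must be traversed by $f$ over an interval of length bounded below uniformly. The snake template is chosen precisely to accommodate this constraint---the fractal boundary structure is produced by thin \emph{tall} fingers distributed over a long horizontal extent, rather than by rapid oscillations on a bounded range. Verifying that the limiting univalent $f$ yields a boundary of Hausdorff dimension exactly $\beta$ (matching the designed geometry of $G_\beta$, with neither loss nor gain from the conformal welding) is the most delicate technical step.
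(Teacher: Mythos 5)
Your reduction (transfer to $K_S$ for an atomic singular measure, and the fact that any bounded univalent member of this model space maps onto a Nevanlinna domain) is correct and matches the paper's setup, and your fractal target domain is similar in spirit to the paper's $H$-tree neighborhood $\Omega$ with limit set $\mathcal H_\infty$. The genuine gap is the core step: actually producing a bounded univalent function in $\mathcal{PW}^{\infty}_{[0,1]}$ with this image. Your plan --- a series of Paley--Wiener reproducing kernels with nodes and coefficients ``calibrated inductively, finger by finger'', univalence of the limit obtained ``via a Koebe-type distortion comparison'' --- is a statement of the goal, not a mechanism. This is exactly the point where the Theorem~\ref{t1} strategy does not transplant: in the disc the induction works because the building block (the Mazalov-type needle $F_b$ of Lemma~\ref{le2}) is small together with its derivative outside a tiny disc around one boundary point, so each step is a genuinely localized perturbation of an already univalent map. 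Paley--Wiener reproducing kernels $(e^{iz}-e^{i\overline{t_n}})/(z-\overline{t_n})$ have no such localization (they decay only like $|x-\Re t_n|^{-1}$ on $\mathbb R$), and the paper itself points out that only a handful of bounded univalent functions in this space are known, all with very regular image boundaries; so the inductive univalence control you invoke is precisely the hard analytic content and is missing. The paper's actual route is global and different: it first constructs one explicit meromorphic map $f(z)=\int_0^z\sum_n a_niy_n(\zeta+iy_n)^{-2}\,d\zeta$ with poles at the geometric points $-iQ^n$, proves by direct estimates (Lemmas~\ref{l1}--\ref{l3}, Proposition~\ref{mainprop}) that $f$ is univalent in $\mathbb C_+$ and that $f(\mathbb R)$ snakes through $\Omega$ along a curve whose closure contains $\mathcal H_\infty$, and only afterwards lands in the Paley--Wiener space by multiplying by $1-F_0$, where $F_0\in\mathcal{PW}^{\infty}$ interpolates $1$ at the poles and satisfies the smallness bounds \eqref{nee}, univalence surviving thanks to the quantitative separation \eqref{ft1}. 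Nothing in your proposal plays the role of \eqref{ft1}, of the pole-cancelling multiplier, or of the verification that the final function is bounded, entire and of the right exponential type.

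Two further points. First, your mechanism for the endpoint $\beta=2$ is incorrect as stated: enlarging the finger widths so that ``their cumulative area is positive'' places that area inside the domain, not on its boundary; a boundary of dimension $2$ (or of positive area, as in Theorem~\ref{t1-0}) must come from the \emph{limit set} of the construction, and at the endpoint the paper needs an extra merging argument (Step~VI in the proof of Theorem~\ref{t1}) precisely because pieces of dimension exactly $2$ cannot be kept disjoint in a single self-similar scheme. Second, whether your ``vertical fingers grafted on a horizontal trunk'' has a limit set of dimension $\beta>1$ at all requires a genuinely nested, tree-like hierarchy (branches on branches, with alternating directions as in the $H$-tree); as described, countably many fingers attached to one trunk have a rectifiable union of boundary pieces, and the dimension claim would need the nesting structure and the mass-distribution estimate to be spelled out.
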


\section{Proof of Theorem~\ref{t1} and related topics}
\label{section3}

Before proving Theorem~\ref{t1} we establish one more simple result of the same
nature. Namely, in Theorem~\ref{t1-0} below we give a hedgehog like construction of a Nevanlinna domain $G$
such that $m_2(\partial G)>0$.
To formulate this theorem we need yet another concept of dimension of sets.

The Hausdorff dimension is defined by considering all coverings of a given
set by small balls $D_j=D(z_j,r_j)$ and inspecting the sums
$\sum_jr_j^s$. One natural modification of this definition of
dimension is obtained when we consider coverings with balls (cubes) of the same
size. Such modification leads to the concept of the Minkowski dimension (or the
box-counting dimension) $\dim_M$, see \cite[Section~5.3]{Mat1995book} and
\cite[Section 3.1]{Fal2003book}. Skipping here the formal definition of Minkowski
dimension we recall that the value $\dim_M(E)$ of a bounded non-empty set
$E$ is calculated as
$$
\lim_{N\to\infty}\frac{\log M_E(N)}{N},
$$
where $M_E(N)$ is the minimal number of cubes (boxes) of side length $2^{-N}$
required to cover $E$.

It can be verified that
$$
\dim_H(E)\le\dim_M(E)\le2
$$
and both inequalities can be strict.

\begin{theorem}\label{t1-0}
There exists a function $f$ of the form
\eqref{eq-aaa} univalent in $\mathbb D$ such that the Nevanlinna domain $G=f(\mathbb D)$
satisfies the properties $m_2(\partial G)>0$, $\dim_M(\partial_aG)=2$.
\end{theorem}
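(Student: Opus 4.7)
The plan is to construct $f$ of the form \eqref{eq-aaa} as an infinite sum of Cauchy kernels with poles at an interpolating Blaschke sequence $(a_n)$ in $\mathbb{D}$, arranging matters so that $f$ is univalent and $G=f(\mathbb{D})$ has hedgehog geometry: a central body with countably many thin spike-like appendages. The tips of the spikes will be placed densely in a planar set of positive area and full Minkowski dimension, while the walls of the spikes, together with their accumulation, will form a subset of $\partial G$ of positive two-dimensional Lebesgue measure. Since $f\in K_B$ for the Blaschke product $B$ with zeros $(a_n)$, the proposition relating Nevanlinna domains and model spaces stated in Section~\ref{section2} automatically gives $G\in\ND$ once $f$ is shown to be univalent.

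\textit{Construction.} Fix a dense sequence $(\zeta_n)_{n\ge 1}\subset\mathbb{T}$ of base points and set $a_n=(1-\delta_n)\zeta_n$, with $\delta_n\downarrow 0$ chosen so rapidly that $(a_n)$ satisfies the Carleson interpolation condition. Then the normalized Cauchy kernels $k_n(z)=\sqrt{1-|a_n|^2}/(1-\bar a_n z)$ form a Riesz basis of $K_B$, and every element of $K_B$ admits a unique expansion of the form \eqref{eq-aaa}. I would choose the coefficients $c_n$ inductively: given $c_1,\ldots,c_{n-1}$ and thus $f_{n-1}$, pick $c_n$ with prescribed argument (to orient the $n$-th spike) and modulus so small that adding $c_n/(1-\bar a_n z)$ to $f_{n-1}$ preserves univalence and is essentially localized near $\zeta_n$, where it creates a spike of length comparable to $|c_n|/\delta_n$ pointing in the direction $\arg(c_n\zeta_n)$. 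Since $\delta_n$ can be shrunk independently of $|c_n|$, the spikes may be made as long as required while $|c_n|$ remains as small as needed for univalence.

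\textit{Calibration of the boundary properties.} I would distribute the tips $\{f(\zeta_n)\}$ densely in a compact planar set $K$ with $m_2(K)>0$ and $\dim_M K=2$, for instance a fat planar Cantor-like set. Each spike tip is accessible from $G$ along the spike's axis, so $\partial_a G\supset\{f(\zeta_n)\}$ is dense in $K$; using the fact that $\dim_M$ is invariant under closure, this forces $\dim_M\partial_a G\ge\dim_M K=2$, and equality follows from the trivial upper bound. For $m_2(\partial G)>0$ I would organize the spikes into generations in the manner of an Osgood curve construction: at generation $N$ attach roughly $4^N$ spikes of carefully chosen widths, arranged so that the walls of all generations together with their accumulation set form a subset of $\partial G$ of positive planar area.

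The main obstacle is the inductive univalence step. The perturbation $c_n/(1-\bar a_n z)$ is not locally supported---it has size of order $|c_n|/\delta_n$ throughout $\mathbb{D}$---so naively the later terms could spoil the univalence of earlier partial sums. This is circumvented by taking $\delta_n$ to decay much faster than any a priori threshold dictated by the moduli of continuity of the preceding maps, so that the $n$-th perturbation is effectively concentrated in a tiny arc around $\zeta_n$, disjoint from the regions of influence of all previously chosen spikes. Univalence of the limit $f$ then follows from Hurwitz's theorem applied to the sequence of univalent partial sums $(f_N)$, and the Riesz basis formalism together with the geometric design yields the remaining boundary properties of $G$.
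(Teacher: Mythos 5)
The overall architecture you propose (hedgehog domain, inductive addition of Cauchy-kernel perturbations, membership in $K_B$ via a Riesz basis, Hurwitz for the limit, closure-invariance of box dimension applied to a dense set of spike tips) parallels the paper's proof, but the step that carries all the analytic weight is wrong as stated. You claim that a \emph{single} term $c_n/(1-\bar a_n z)$, with $|c_n|$ small and $\delta_n=1-|a_n|$ shrunk at will, "creates a spike of length comparable to $|c_n|/\delta_n$" while remaining localized and preserving univalence. A single Cauchy kernel is a M\"obius map: its image of a small boundary arc near $\zeta_n$ is approximately a \emph{circle} of diameter $|c_n|/\delta_n$, so the added bump is a blob of that diameter, never a thin needle. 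Quantitatively, for the model case $f(z)=z+c/(1-\bar a z)$ one has $f(z_1)-f(z_2)=(z_1-z_2)\bigl(1-c\bar a/((1-\bar a z_1)(1-\bar a z_2))\bigr)$, and since the factors $(1-\bar a z_i)$ fill the disc $D(1,|a|)$, an outward-pointing coefficient must satisfy $|c|\lesssim\delta_n^2$ for injectivity; the resulting bump has size $\lesssim\delta_n$, i.e.\ comparable to the arc it lives on. Making $\delta_n$ smaller with $|c_n|$ fixed does not help --- it makes the perturbation larger near $\zeta_n$ and destroys injectivity (or, with the opposite orientation, produces a huge blob that must collide with the previously constructed image). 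Since your fat set $K$ necessarily lies at definite distance from the central body, the spikes must have length bounded below, so this obstruction cannot be bypassed by rescaling. This is exactly why the paper's proof does not use single kernels but imports ``Mazalov's needle'' $F_b$ --- a finite sum of \emph{many} simple fractions with poles clustering outside $\overline{\mathbb D}$, satisfying $F_b(1)=3$, $|F_b|+|F_b'|\le b$ off $D(1,\sqrt b)$, $|\Im F_b|\le b$ near $1$, and $\Re F_b'\ge-\tfrac12$ on $\overline{\mathbb D}$ --- and gets univalence of all partial sums and of the limit from $\Re\varphi'>0$ (Noshiro--Warschawski), not from a smallness/"region of influence" argument.

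A secondary gap is the positive-area claim: "organize the spikes \`a la Osgood with carefully chosen widths" names the desired conclusion rather than a mechanism, since the spike walls themselves are null sets and everything hinges on controlling the accumulation set and showing it avoids the open domain. The paper does this concretely: it fixes a nowhere dense compact $K\subset\mathbb T$ of positive length, points one needle into each sector over the middle half $I_j^*$ of each complementary arc $I_j$, keeps the whole image inside $\overline{D(0,1.5)}\cup\bigcup_j S((1.5,4],I_j^*)$ while each needle crosses the annulus $\{1.5\le|w|\le2\}$, and concludes $\partial G\supset S([1.5,2],K)$, hence $m_2(\partial G)>0$; the box-dimension count is then done along the needles (with $\gamma_n\gtrsim 1/(n\log^2(n+1))$) rather than via dense tips. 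Your dense-tips idea for $\dim_M(\partial_aG)=2$ is fine in itself (and could even yield the area statement by comparing $m_2(K)$ with the total area of the thin spikes), but both conclusions stand or fall with the needle construction, which your proposal does not supply.
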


%Here and in what follows we denote by $m_1$ (normalized)
%Lebesgue measure on the unit circle $\mathbb T$.

\begin{proof}
We start with the following building block, sometimes called ``Mazalov's needle", see
\cite[Section~2]{Maz2016spmj}. For every sufficiently small $b>0$ there
exists a rational function $F_b$ with simple poles $\{w_k\}_{k=1}^L$ in
$\mathbb C\setminus \overline{\mathbb D}$ such that
\begin{align}
|F_b(z)|+|F'_b(z)|\le b,&\qquad z\in \mathbb D\setminus D(1,\sqrt{b}),\notag\\
|F_b(z)|\le b\ \text{and}\ |F'_b(z)|\le 1,&\qquad z\in \mathbb D\setminus D(1,b),\notag\\
%|F'_b(z)|\le 1,&\qquad z\in \mathbb D\setminus D(1,b),\\
\Re F'_b(z)\ge -\frac 12,&\qquad z\in \overline{\mathbb D},\notag\\%\cap D(1,b),\\
|\Im F_b(z)|\le b,&\qquad z\in \mathbb D\cap D(1,b),\notag\\
F_b(1)=3,&\\
\intertext{and, finally,}
\sum_{k=1}^L(|w_k|-1)\le b&.
\label{ft2}
\end{align}
\smallskip

For $I\subset\mathbb R_+$, $E\subset[0,2\pi)$ we use the notation
$$
S(I,E)=\Bigl\{re^{i\theta}\in\mathbb C\colon  r\in I,\, \theta\in E\Bigr\}.
$$

Let us choose a nowhere dense compact set $K$ of positive one-dimen\-sional Lebesque measure on the unit circle $\mathbb T$. We have $\mathbb T\setminus K=\bigsqcup_{j\ge 1} I_j$,
where $I_j=\{e^{it}\colon |t-\alpha_j|<\gamma_j\}$ are open arcs.
Set $I^*_j=\{e^{it}\colon |t-\alpha_j|<\gamma_j/2\}$.

We define a sequence $\{\varphi_n\}$ of rational functions, a sequence of unimodular numbers $\{e^{i\theta_n}\}$ and a sequence of positive numbers
$\{b_n\}$ in the
following inductive procedure. Set $\varphi_0(z)=z$. On the step $n\ge0$ we have
$$
\varphi_n(z)=z+\sum_{j=1}^n e^{i\theta_j}F_{b_j}(ze^{-i\theta_j}).
$$
We assume that $\varphi_n$ satisfies the following properties:
\begin{align*}
(a) \quad &\text{the set $\varGamma_n=\varphi_n(\mathbb T)$ is a simple closed curve},\\
(b) \quad &\arg\varphi_n(\overline{D(e^{i\theta_j},b_j)\cap \mathbb D})\subset I^*_j,\qquad 1\le j\le n,\\
(c) \quad &\varGamma_n\subset S\bigl((0.5,1.5),[0,2\pi)\bigr)\cup
%&\qquad
\bigcup_{j=1}^n S\bigl((1,4),I^*_j\bigr),
%\Bigl\{re^{i\theta}: |\theta-\alpha_j|<\frac{\gamma_j}2,\, r\in(1,3)\Bigr\},
\\
(d) \quad &|\varphi_n(e^{i\theta_j})|>2,\qquad 1\le j\le n,\\
(e) \quad &\text{the index of $\varGamma_n$ with respect to the point $0$ is equal to $1$.}
\end{align*}

By (e), for every $t\in[0,2\pi]$ there exists $\theta\in[0,2\pi]$ such that
$$
\arg\varphi_n(e^{i\theta})=t.
$$
Therefore, we can choose
$\theta_{n+1}\in[0,2\pi)$ satisfying
$$
\arg\varphi_n(e^{i\theta_{n+1}})=\alpha_{n+1}.
$$
Set
$$
\varphi_{n+1}(z)=\varphi_n(z)+ e^{i\theta_{n+1}}F_{b}(ze^{-i\theta_{n+1}}).
$$
For sufficiently small $b\in(0,2^{-n})$ the condition (b) on $\varphi_{n+1}$
holds for $1\le j\le n$ and for $j=n+1$ by continuity. The same
is true for (c) and (d). A simple continuity argument together with condition
(c) shows that the index of $\varGamma_{n+1}$ with respect to $0$ is equal to
$1$. Fix such small $b<2^{-n}$ and denote it by $b_{n+1}$. Since $\Re \varphi'_{n+1}\ge 1/4$ on $\mathbb D$, the function $\varphi_{n+1}$
is univalent. Since $\varphi_{n+1}$ is rational, we obtain (a).
This completes the induction step.

We define
$$
\varphi(z)=z+\sum_{j\ge1}e^{i\theta_j}F_{b_j}(ze^{-i\theta_j}).
$$
The function $\varphi$ is analytic in the unit disc and belongs to the space
$K_B$ for some Blaschke product $B$ (it follows from property \eqref{ft2} of
the function $F_b$ and the estimate $b_{n+1}<2^{-n}$). Since the arcs $I_j$ are disjoint, by property (b) the
sets $\overline{D(e^{i\theta_j},b_j)\cap \mathbb D}$ are disjoint. Now, the
estimates on the derivative of $F_b$ yield that $\varphi$ is univalent
(because $\Re\varphi'>0$ on $\mathbb D$).
Thus, $G=\varphi(\mathbb D)$ is a (hedgehog like) 1Nevanlinna domain.

Next, by (c),
$$
\varphi(\mathbb D)\subset \overline{D(0,1.5)}\cup
\bigcup_{j\ge 1} S\bigl((1.5,4],I^*_j\bigr).
$$
The function $\varphi$ is continuous on $\overline{D(e^{i\theta_j},b_j)\cap
\mathbb D}$, $j\ge 1$, and
$$
|\varphi(e^{i\theta_j})|\ge 2,\qquad j\ge 1.
$$
By continuity of $\varphi$ on $\mathbb D$, we have
$$
\varphi(\mathbb D)\cap S(\{r\},I^*_j)\not=\varnothing, \qquad
1.5<r\le 2,\ j\ge 1,
$$
and hence,
$$
\partial\varphi(\mathbb D)\cap S(\{r\},I^*_j)\not=\varnothing, \qquad
1.5\le r\le 2,\ j\ge 1,
$$
Therefore,
$$
\partial\varphi(\mathbb D)\supset S([1.5,2],K)
$$
and hence,
$$
m_2(\partial\varphi(\mathbb D))>0.
$$

In order to finish the proof of Theorem~\ref{t1-0} we need to calculate the
Minkowski dimension %of the support of harmonic measure of $\phi(\mathbb D)$ (or
of the set $\partial_aG$ (i.e. the set of the accessible points of the boundary
of $G=\varphi(\mathbb D)$). %in our construction from Theorem~\ref{t1-0}.

Suppose that the set $K$ satisfies the condition $\gamma_n\gtrsim
\dfrac1{n\log^2(n+1)}$, $n\ge1$. For every $j\ge1$, to cover the
set
$$
%\mathcal H\cap S\bigl([3/2,2],[\alpha_j-\gamma_j/2,\alpha_j+\gamma_j/2]\bigr)
\partial_aG\cap S\bigl([1.5,2],I^*_j\bigr)
$$
we need at least $2^{N-1}$ boxes of side length $2^{-N}$. Since for different
$n$ with $\gamma_n>2^{1-N}$ these boxes are disjoint, we obtain that
$$
M_{\partial_aG}(N)\gtrsim 2^N\,{\rm card\,}\{n:\gamma_n> 2^{1-N}\},
$$
which yileds that $\dim_M(\partial_aG)=2$.
\end{proof}

\begin{remark}
N.~Makarov proved that for every simply connected domain, the support of
harmonic measure has Hausdorff dimension $1$. Later on, P.~Jones and T.~Wolff
proved that for every planar domain, the support of harmonic measure has
Hausdorff dimension at most $1$. For these results see
\cite[Section~6.5]{GarMar2005book}. In order to link this observation with
our subject we need to recall that the harmonic measure on $\partial G$
lives on $\partial_aG$, which means that the harmonic measure of the set
$E\setminus\partial_aG$ is zero for any Borel set $E\subset\partial G$. Moreover,
in the definition of a Nevanlinna domain we are dealing with the equality
\eqref{nd1} which holds, essentially, on $\partial_aG$.
\end{remark}

\begin{proof}[Proof of Theorem~\ref{t1}]
Fix $\varepsilon\in(0,1)$. We are going to construct a Nevanlinna domain
$G=G_{\varepsilon}$ such that
$\dim_H(\partial_aG)=2-\varepsilon$. In order to construct a Nevanlinna
domain $G$ with $\dim_H(\partial_aG)=2$ we just need to merge our
constructions with $\varepsilon_k\to0$, $k\to\infty$, see Step VI below.

\subsubsection*{Step~I.~ Binary words}

Denote by $\mathcal W$ the set of all binary words, i.e. words in the
alphabet $\{0,1\}$. For a given word $\omega\in\mathcal W$ we denote by
$|\omega|$ its length (i.e. the number of digits in $\omega$) and by
$\sum\omega$ the sum of its digits. Furthermore, we set
$\sign\omega=|\omega|-\sum\omega$. Given two words
$\omega_1,\omega_2\in\mathcal W$ we denote by
$\omega_1\omega_2=\omega_1{\cdot\,}\omega_2$ their concatenation. The empty word
will be denoted by $\mathfrak e$. Finally, for a word $\omega=\alpha\beta$,
where $\alpha,\beta\in\mathcal W$ and $|\beta|=1$ we put
$\widetilde\omega:=\alpha$.

\subsubsection*{Step~II.~$H$-tree and its neighborhood $\Omega$}

Fix $\varepsilon\in(0,10^{-2})$ and set $\lambda=2^{-1/2}-\varepsilon$. We
define a system of (closed) intervals
$I_\omega:=I_{z_\omega,\zeta_\omega}:=[z_\omega,z_\omega+\zeta_\omega]$,
$\omega\in\mathcal W$. Set $\psi_{\mathfrak e}(u)=u$,
$I_{\mathfrak e}=I_{0,1}=[0,1]$, so that
$z_{\mathfrak e}=0$ and $\zeta_{\mathfrak e}=1$. Furthermore, we define the
mappings
\begin{align*}
\psi_{\omega\cdot1}\colon u\mapsto z_\omega+(1-\varepsilon)\zeta_\omega+i\lambda\zeta_\omega u,\\
\psi_{\omega\cdot0}\colon u\mapsto z_\omega+(1-\varepsilon)\zeta_\omega-i\lambda\zeta_\omega u,\\
\end{align*}
and the segments $I_{\omega\cdot0}$ and $I_{\omega\cdot1}$ of the next
generation
\begin{align*}
I_{\omega\cdot1}:=\psi_{\omega\cdot1}(I_{\mathfrak e})=I_{z_\omega+(1-\varepsilon)\zeta_\omega,i\lambda\zeta_\omega},\\
I_{\omega\cdot0}:=\psi_{\omega\cdot0}(I_{\mathfrak e})=I_{z_\omega+(1-\varepsilon)\zeta_\omega,-i\lambda\zeta_\omega}.
\end{align*}
We have
\begin{gather*}
I_{\omega{\cdot}1},I_{\omega{\cdot}0}\perp I_{\omega},\\
|I_{\omega{\cdot}1}|=|I_{\omega{\cdot}0}|=\lambda| I_{\omega}|.
\end{gather*}
Furthermore, if $\omega_1,\omega_2\in\mathcal W$, then
\begin{equation}
\psi_{\omega_2}\circ \psi_{\omega_1}=\psi_{\omega_1{\cdot}\omega_2}.
\label{e1}
\end{equation}

Set
\begin{align*}
\mathcal H&=\bigcup_{\omega\in\mathcal W}I_{\omega},\\
\mathcal H_\infty&=\overline{\mathcal H}\setminus \mathcal H.
\end{align*}

Let now $\Omega_{\mathfrak e}$ be the $\varepsilon/100$-neighborhood of
$I_{\mathfrak e}$,
\begin{align*}
\Omega_\omega&=\psi_{\omega}(\Omega_{\mathfrak e}),\\
\Omega&=\bigcup_{\omega\in\mathcal W}\Omega_\omega.
\end{align*}

Next we establish several geometrical properties of the above described fractal
construction.

\begin{lemma}\label{le1}
\item{\rm(a)} Every point of $\mathcal H_\infty$ is an accessible point of $\partial\Omega$.
\item{\rm(b)} $\dim_H(\mathcal H_\infty)\ge 2-10\varepsilon$.
\item{\rm(c)} If $\omega\in\mathcal W$, then
$$
\diam(\Omega_\omega)\asymp \lambda^{|\omega|}.
$$
\item{\rm(d)} If $\omega_1,\omega_2\in\mathcal W$ and $\omega_1\ne\omega_2{\cdot}s$,
$\omega_2\ne\omega_1{\cdot}s$, $s\in\{0,1\}$, then
$$
\dist(\Omega_{\omega_1},\Omega_{\omega_2})\gtrsim \lambda^{\min(|\omega_1|,|\omega_2|)}.
$$
\end{lemma}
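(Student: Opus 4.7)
All four claims flow from the self-similar tree structure: each $\psi_\omega$ is an affine similarity of contraction ratio $|\zeta_\omega|=\lambda^{|\omega|}$, and \eqref{e1} reduces any statement about $\omega$ to one about a scaled, rotated copy of the base configuration. Part (c) is then immediate: $\Omega_\omega=\psi_\omega(\Omega_{\mathfrak e})$ and the base set $\Omega_{\mathfrak e}$ has diameter bounded above and below by absolute constants, so $\diam(\Omega_\omega)\asymp\lambda^{|\omega|}$.

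For (d), my plan is to let $\omega$ be the longest common prefix of $\omega_1,\omega_2$ and to apply $\psi_\omega^{-1}$, reducing the claim to the case of empty common prefix and the universal bound $\dist(\Omega_{\omega_1'},\Omega_{\omega_2'})\gtrsim 1$. In the reduced picture one $\omega_i'$ begins with $0$ and the other with $1$; the two subtrees are rotated copies of $\mathcal H$ attached perpendicularly at the common branching point $z=1-\varepsilon$ and extending into opposite half-planes. The hypothesis $\omega_i\ne\omega_j\cdot s$ guarantees that each subtree has completed at least one further perpendicular bifurcation, placing the relevant $\Omega_{\omega_i'}$ at uniform distance $\gtrsim\lambda$ from the horizontal axis; together with the smallness $\varepsilon/100\ll\lambda,\,1-\lambda$, this yields the required separation before rescaling and hence $\gtrsim\lambda^{\min(|\omega_1|,|\omega_2|)}$ after.

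For (a), any $p\in\mathcal H_\infty$ is coded by a unique infinite sequence $\sigma=\sigma_1\sigma_2\cdots\in\{0,1\}^{\mathbb N}$ with $p=\lim_n z_{\sigma_1\cdots\sigma_n}$. The broken-line path running successively along $I_{\mathfrak e},I_{\sigma_1},I_{\sigma_1\sigma_2},\ldots$ has total length bounded by $\sum_n(1-\varepsilon)\lambda^n<\infty$, lies in $\mathcal H\subset\Omega$, and terminates at $p$, so $p$ is accessible from $\Omega$. To show $p\in\partial\Omega$, assume that $p\in\Omega_{\omega'}$ for some $\omega'$. Then a small disk around $p$ lies in $\Omega_{\omega'}$, so for $n$ large enough the point $z_{\sigma_1\cdots\sigma_n}\in I_{\sigma_1\cdots\sigma_n}\subset\Omega_{\sigma_1\cdots\sigma_n}$ also lies in $\Omega_{\omega'}$; but $\sigma_1\cdots\sigma_n$ and $\omega'$ satisfy the hypothesis of (d) once $n>|\omega'|+1$, contradicting the positive distance bound.

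For (b), I apply the mass distribution principle with the measure $\mu$ on $\mathcal H_\infty$ defined as the push-forward of the symmetric Bernoulli measure on $\{0,1\}^{\mathbb N}$ under the coding $\sigma\mapsto p(\sigma)$, so each length-$n$ cylinder carries mass $2^{-n}$. Given $r\in(0,1)$, pick $n$ with $\lambda^{n+1}<r\le\lambda^n$: by (d) any two distinct length-$n$ cylinders are at mutual distance $\gtrsim\lambda^n\asymp r$, so $B(x,r)$ meets only $O(1)$ of them, whence $\mu(B(x,r))\lesssim 2^{-n}\asymp r^{d}$ with $d=\log 2/\log(1/\lambda)$. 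Expanding $\lambda=2^{-1/2}-\varepsilon$ gives $d\ge 2-10\varepsilon$ for $\varepsilon\in(0,10^{-2})$, and Frostman's lemma yields $\dim_H(\mathcal H_\infty)\ge d$. The principal obstacle is (d); once this geometric separation is in hand, (a) and (b) follow almost at once. Verifying (d) rigorously amounts to showing that deep descendants in sibling subtrees never wind back into the neighborhood of the other, which rests on the smallness of $\varepsilon/100$ relative to both $\lambda$ and $1-\lambda$.
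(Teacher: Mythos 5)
Your overall route coincides with the paper's (self-similarity via \eqref{e1} for (c) and (d), a path along the tree for (a), Frostman/mass distribution with the natural uniform measure for (b)), and parts (a), (c) are fine; but the two quantitative claims on which your argument hinges are not correct as stated. In (d), after reducing by the longest common prefix you assert that the hypothesis forces one further bifurcation in each subtree and that the relevant pieces then lie at distance $\gtrsim\lambda$ from the horizontal axis, the smallness needed being that of $\varepsilon/100$ against $\lambda$ and $1-\lambda$. Neither claim holds. First, the hypothesis $\omega_1\ne\omega_2\cdot s$, $\omega_2\ne\omega_1\cdot s$ does not exclude sibling pairs $\omega\cdot1,\omega\cdot0$, for which no further bifurcation has occurred and the two neighbourhoods actually meet at the common branch point $\psi_\omega(1-\varepsilon)$; this case must be excluded or treated separately (the paper effectively handles it later through \eqref{e10}). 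Second, and more importantly, the clearance is not $\gtrsim\lambda$: since $\lambda=2^{-1/2}-\varepsilon$ is nearly critical, the lowest points of the closed subtree rooted at $I_1$ have height $\lambda(1-\varepsilon)-\bigl(\lambda^3+\lambda^5+\cdots\bigr)=\lambda(1-\varepsilon)-\lambda^3/(1-\lambda^2)\asymp\varepsilon$, i.e.\ the downward excursions almost cancel the initial rise, and deep branches on opposite sides of a segment approach each other to within $O(\varepsilon)$ of its line. So the constant in (d) necessarily degenerates like $\varepsilon$ (harmless, since $\varepsilon$ is fixed, but it is governed by $1-2\lambda^2\asymp\varepsilon$, not by $\lambda$ or $1-\lambda$), and your stated geometric reason is false.

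This error propagates into (b): the step ``by (d) any two distinct length-$n$ cylinders are at mutual distance $\gtrsim\lambda^n$'' is a non sequitur. A point of the cylinder over a word $\omega$ with $|\omega|=n$ is a limit of points of deep segments and lies only in neighbourhoods $\Omega_{\omega\tau}$ with $|\tau|$ large, not in $\Omega_\omega$; applying (d) to such pairs yields separation only at the scale $\lambda^{n+|\tau|}$, far smaller than $\lambda^n$. What you actually need is a separation statement for closures of whole subtrees: the closed subtree rooted at $I_{\omega\cdot1}$ stays on one side of the line through $I_\omega$ at distance $\gtrsim\varepsilon\lambda^{|\omega|}$ (the same geometric-series computation as above, using $\lambda<2^{-1/2}$), whence, after reducing by the common prefix via \eqref{e1}, distinct length-$n$ cylinders are $\gtrsim\varepsilon\lambda^n$-separated. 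With this extra (easy but genuinely required) lemma your mass-distribution argument does give $\dim_H(\mathcal H_\infty)\ge\log 2/\log(1/\lambda)\ge2-10\varepsilon$, which is exactly what the paper gets by taking the weak limit of the length-equidistributed measures and invoking Frostman's lemma.
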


\begin{proof}
Properties (c) and (d) are easily verified for $\omega_1=\mathfrak e$. After
that, we just apply the self-similarity property \eqref{e1}.

Next, property (a) follows immediately from the construction of $\Omega$.

Finally, property (b) is a direct consequence of Frostman's lemma (see, for
example, \cite[Section~8]{Mat1995book}). It suffices to consider the weak
limit of the probability measures equidistributed (with respect to the
length) on $\bigcup_{\omega\in\mathcal W\colon |\omega|=n}I_{\omega}$,
$n\to\infty$.
\end{proof}

\subsubsection*{Step~III.~Mazalov type construction}

Our next ingredient is a Mazalov type lemma, compare to \cite{Maz2016spmj}.

Set $\mathbb C_L=\{z\in\mathbb C\colon \Re{z}\le 0\}$.

\begin{lemma}\label{le2}
Given $b\in(0,10^{-2})$, there exists a rational function
$$
F(z)=F_b(z)=\sum_{k=1}^M\frac{c_k}{z-w_k}
$$
with $c_k>0$, $w_k>0$, $1\le k\le M$, such that
\begin{align*}
(a) \quad &\text{$|F(z)|+|F'(z)|\le Ab$, $z\in\mathbb C_L\setminus D(0,b)$},\\
(b) \quad &\text{$\Re F(z)\ge -Ab^2$, $\Re F'(z)\ge -Ab$, $z\in\mathbb C_L$,}\\
(c) \quad &\text{$|\Im F(z)|\le Ab$, $z\in\mathbb C_L$},\\
(d) \quad &\text{$|F(0)-1|\le Ab^2$, $\Re F(z)\le 1+Ab^2$, $z\in \mathbb C_L$},\\
(e) \quad &\text{$\sum_{k=1}^M(c_k+w_k)\le Ab$},\\
(f) \quad &\text{If $t\in[1,3]$, $\delta=\exp(-2(1-t\varepsilon)/b^2)$, $\gamma\in(\pi/2,3\pi/2)$, then}\\
&\qquad\qquad
|\Re F(\delta e^{i\gamma})-(1-t\varepsilon)|\le Ab^2,\\
&\qquad\qquad\Bigl| \frac{b^2F'(\delta e^{i\gamma})}{2\exp(2(1-t\varepsilon)/b^2)}+e^{-i\gamma}\Bigr|\le Ab,
%\item{\rm(g)} If $z_1,z_2\in\mathbb C_L$, $\Re F(z_1)>1/2$, $\Re F(z_2)>1/2$, $|z_1|<|z_2|$, then
%$$
%\Re F(z_1)>\Re F(z_2)-Ab^2.
%$$
\end{align*}
for some absolute constant $A>0$.
\end{lemma}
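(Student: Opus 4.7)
The plan is to construct $F_b$ as a rational Riemann-sum approximation to a Cauchy-type integral
$$F_\infty(z)=\int_0^b \frac{\rho(w)\,dw}{z-w}$$
for a carefully chosen density $\rho$ supported in $(0,b)$. Fixing a partition $0<w_1<\cdots<w_M<b$ and residues $c_k\approx\rho(w_k)\Delta w_k$ yields $F(z)=\sum_{k=1}^{M}c_k/(z-w_k)$ with the required pole structure. The poles live in $(0,b)$, hence outside $\mathbb C_L$, so $F$ is holomorphic on $\mathbb C_L$.

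The estimates (a)--(e) are expected to follow from standard pointwise estimates on the Cauchy kernel: for (a), one uses $|z-w|\ge b$ on $\mathbb C_L\setminus D(0,b)$ for $w\in(0,b)$ together with $\|\rho\|_{L^1}\lesssim b$; for (b)--(c), one uses that for $z\in\mathbb C_L$ and $w\in(0,b)$, both $\Re(1/(z-w))$ and $\Im(1/(z-w))$ have controlled signs and magnitudes; (d) reduces to calibrating $F(0)=-\sum c_k/w_k$ to be close to $1$ and then invoking monotonicity of $\Re F$ in $\Re z$; (e) is built into the construction by keeping $\sum|c_k|$ and $\sum w_k$ of order $b$.

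The heart of the lemma is condition (f). Rewriting $1-t\varepsilon=-(b^2/2)\log\delta_t$ with $\delta_t:=\exp(-2(1-t\varepsilon)/b^2)$, the first estimate in (f) demands $\Re F(z)\approx -(b^2/2)\log|z|$ on a two-dimensional annular shell in $\mathbb C_L$, while the second demands $F'(z)\approx -2/(b^2z)$ on the same shell. A single holomorphic function cannot satisfy both: the former corresponds to $F\sim -(b^2/2)\log z$, the latter to $F\sim -(2/b^2)\log z$, differing by a factor of $4/b^4$. The reconciliation exploits the rational structure: with poles logarithmically spaced in $(0,b)$ the function $F$ oscillates rapidly enough for its local derivative to attain the required magnitude $2/(b^2|z|)$ while its average over each circle $|z|=\delta_t$ tracks the slow $-(b^2/2)\log|z|$ profile. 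The tolerances $Ab^2$ on the value and $Ab$ on the scaled derivative in (f) are precisely calibrated to permit this mismatch between pointwise derivative and effective slope.

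The main obstacle is the precise verification of (f) uniformly in $t\in[1,3]$ and $\gamma\in(\pi/2,3\pi/2)$. I would analyze $\int\rho(w)/(z-w)^j\,dw$ for $j=1,2$ at $z=\delta_te^{i\gamma}$ via the change of variable $w=\delta_t e^s$, which reduces matters to a Laplace-type integral on $\mathbb R$; the leading asymptotics are extracted by saddle-point analysis near $s=0$, and the key calculation is to choose $\rho$ so that these leading terms have the exact coefficients $-(b^2/2)$ and $-2/(b^2z)$ demanded by (f). The discretization error is controlled by choosing the spacing ratio $\tau$ in $w_k=be^{-k\tau}$ small enough in terms of $b$, with $M\sim 1/\tau$ large but finite so that (e) is preserved.
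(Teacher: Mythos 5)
Your skeleton --- realize $F_b$ as a discretized Cauchy transform of a positive measure on $(0,b)$, read (a)--(e) off kernel estimates, control the quadrature error --- is indeed the route of the paper, which takes the constant density $b^2/2$ on $[bN^{-2},b]$, $N=\lfloor\exp(b^{-2})\rfloor$, i.e. $G(z)=\frac{b^2}{2}\log\frac{b-z}{bN^{-2}-z}$, and discretizes by the compound Simpson rule so that $|G^{(j)}-F^{(j)}|\le A_1b^{2-j}$ on $\mathbb C_L$, $j=0,1$. The genuine gap is your treatment of (f). You correctly sense that, read literally, the two inequalities of (f) are incompatible (the first forces $\Re F(z)\approx-\frac{b^2}{2}\log|z|$, the second $F'(z)\approx-\frac{2}{b^2z}$), but the reconciliation you propose --- rapid oscillation making the pointwise derivative of size $2/(b^2|z|)$ while only circle averages of $\Re F$ track the slow profile --- cannot work, because (f) is a \emph{pointwise} requirement at every $t\in[1,3]$ and $\gamma\in(\pi/2,3\pi/2)$, i.e. on the whole open half-annulus $\{\delta_1<|z|<\delta_3,\ \Re z<0\}$, where $\delta_t=\exp(-2(1-t\varepsilon)/b^2)$ and where $F$ is holomorphic (its poles lie on the positive axis). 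If $|\Re F(z)+\frac{b^2}{2}\log|z||\le Ab^2$ holds pointwise there, then the interior gradient estimate for the harmonic function $u=\Re F+\frac{b^2}{2}\log|z|$ on the disc $D(-\delta_2,\delta_2/2)$ gives $|F'(-\delta_2)+\frac{b^2}{2\delta_2}|\lesssim b^2/\delta_2$, hence $|F'(-\delta_2)|\lesssim b^2/\delta_2$, contradicting $|F'(-\delta_2)|\sim 2/(b^2\delta_2)$; no density $\rho$, saddle-point tuned or not, can produce your two different coefficients. The way out is to recognize a normalization misprint in the second inequality: the quantity should be $2F'(\delta e^{i\gamma})/\bigl(b^2\exp(2(1-t\varepsilon)/b^2)\bigr)$, i.e. $F'(\delta e^{i\gamma})\approx-\frac{b^2}{2}e^{2(1-t\varepsilon)/b^2}e^{-i\gamma}=-\frac{b^2}{2z}$, which is consistent with the first inequality, is exactly what the explicit $G$ above satisfies, and is what is actually used in Step~IV of the proof of Theorem~\ref{t1}. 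A proof must target that corrected statement; a construction aimed at the literal one, as in your plan, is a step that would fail.

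Two further points. Your plan never pins down $\rho$, yet (f) essentially forces it: to have $\Re F(\delta_te^{i\gamma})\approx 1-t\varepsilon$ vary by $2\varepsilon$ as $t$ runs over $[1,3]$, the support of the measure must extend down to the exponentially small scale $\exp(-2(1-3\varepsilon)/b^2)$; your geometric grid $w_k=be^{-k\tau}$ with $M\sim1/\tau$ stops near $b/e$, so on $|z|\le\delta_3$ your $F$ is essentially the constant $F(0)$ and the first inequality in (f) already fails. (In the paper the nodes go down to $bN^{-2}\approx b\exp(-2/b^2)$, and the fourth-derivative Simpson error bound is what keeps $F$, $F'$ within $O(b^2)$, $O(b)$ of $G$, $G'$ uniformly on $\mathbb C_L$ while preserving (e).) Finally, with $c_k>0$, $w_k>0$ in the form $\sum_k c_k/(z-w_k)$ one gets $F(0)=-\sum_k c_k/w_k<0$, so your calibration ``$F(0)\approx1$'' for (d) is impossible as written; the residues in this normalization must be negative (equivalently, write $F(z)=\int\rho(w)\,dw/(w-z)$) --- a sign slip in the statement that a careful write-up should fix rather than inherit.
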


Thus, the image of the left half-plane under the map $z\to z+F(z)$ is the union of the slightly perturbed left half-plane and a thin domain (needle)
close to the interval $[0,1]$. Property (f) means that we have good control on $F'(z)$ while $F(z)$ is close to $1-\varepsilon$ and $\Re z$ is close to $0$.

\begin{proof}
Let $N$ be the integer part of $\exp(b^{-2})$. We start with the function
$$
G(z)=\frac{b^2}{2}\int_{bN^{-2}}^b \frac{dt}{t-z}=\frac{b^2}2\log\frac{b-z}{bN^{-2}-z},\qquad z\in\mathbb C_L,
$$
where $\log$ is the principal branch of the logarithm function.

This function has the following simple properties:
\begin{gather}
G(0)=\max_{\mathbb C_L}\Re G=b^2\log N,\label{ft3}\\
\Re G(z)> 0,\qquad z\in\mathbb C_L,\label{ft4}
\end{gather}
and
\begin{equation}
|G(z)|\le \frac{Ab^3}{|z|},\quad |G'(z)|\le \frac{Ab^3}{|z|^2}, \qquad z\in\mathbb C_L\setminus D(0,b),
\label{e2}
\end{equation}
for some absolute constant $A>0$. Furthermore,
\begin{gather}
\Re G'(z)\ge -b,\qquad z\in\mathbb C_L,\label{e3}\\
|\Im G(z)|\le \pi b^2, \qquad z\in \mathbb C_L.\label{e4}
\end{gather}
Finally, if $t\in[1,3]$, $\delta=\exp(-2(1-t\varepsilon)/b^2)$, $\gamma\in(\pi/2,3\pi/2)$, then
\begin{gather}
|\Re G(\delta e^{i\gamma})-(1-t\varepsilon)|\le Ab^2,\label{ft5}\\
\Bigl| \frac{b^2G'(\delta e^{i\gamma})}{2\exp(2(1-t\varepsilon)/b^2)}+e^{-i\gamma}\Bigr|\le Ab.\label{ft6}
\end{gather}

%$z_1,z_2\in\mathbb C_L$, $\Re G(z_1)>1/2$, $\Re
%G(z_2)>1/2$, $|z_1|<|z_2|$, then
%$$
%\Re G(z_1)>\Re G(z_2)-Ab^2
%$$
%for some absolute constant $A>0$.

Next, like in \cite{Maz2016spmj}, we use the Newton--Cotes quadrature formula of degree $2$ (the Simpson quadrature formula).
This formula claims that given an interval $[\alpha,\beta]\subset\mathbb R$ and
$f\in C^4([\alpha,\beta])$, we have
\begin{equation}
\Bigl|\int_\alpha^\beta f(x)\,dx -\frac{\beta-\alpha}{6}Q\Bigr|\le
\frac{(\beta-\alpha)^5}{2880}\max_{x\in [\alpha,\beta]}|f^{(4)}(x)|,
\label{e5}
\end{equation}
where
$$
Q=\sum_{j=0}^2d_jf\Bigl(\frac{j\alpha+(2-j)\beta}{2}\Bigr),
\quad d_0=d_2=1,\ d_1=4.
$$

Now we split the interval $[bN^{-2},b]$ into $N-1$ subintervals
$[bk^{-2},b(k-1)^{-2}]$, $2\le k\le N$, and set
$$
F(z)=\frac{b^2}{12}\sum_{k=2}^N\Bigl(\frac{b}{(k-1)^2}-\frac{b}{k^2}\Bigr)
\sum_{j=0}^2\frac{d_j}{\frac{jb}{2k^2}+\frac{(2-j)b}{2(k-1)^2}-z}.
$$
Then $F$ is a finite sum of simple fractions $c_k/(z-w_k)$ with
$w_k\in[bN^{-2},b]$, $c_k>0$,
$$
\sum_kw_k\le Ab,\qquad \sum_kc_k\le Ab^3,
$$
for some absolute constant $A$, and property (e) follows.

Applying estimate \eqref{e5} with $f(x)=1/(x-z)$ and with $f(x)=1/(x-z)^2$ we obtain
\begin{equation}
|G^{(j)}(z)-F^{(j)}(z)|\le
Ab^2\sum_{k\ge 1}\Bigl(\frac{b}{k^3}\Bigr)^5\Bigl(\frac{k^2}{b}\Bigr)^{5+j}
\le A_1b^{2-j},
\label{e6}
\end{equation}
for $z\in\mathbb C_L$, $j=0,1$, and for some absolute constants
$A$, $A_1$.

Now, \eqref{e6} and \eqref{ft3}--\eqref{e4} give properties (a)--(d).

Finally, property (f) follows from \eqref{e6}, \eqref{ft5}, \eqref{ft6}.
\end{proof}

\subsubsection*{Step~IV.~Conformal maps}

Consider an enumeration $\mathcal W=\{\omega_n\}_{n\ge0}$ such that if
$\omega_n,\omega_m\in\mathcal W$ and $|\omega_n|<|\omega_m|$, then $n<m$. In
particular, $\omega_0=\mathfrak e$. Denote $\mathcal
W_N=\{\omega_0,\omega_1,\ldots,\omega_{N-1}\}$, $\mathcal W_0=\varnothing$.

Set $\varphi_0(z)=z-1$. Then $\varphi_0(\mathbb D)\subset\mathbb C_L$. The
functions $\varphi_n$, $n\ge1$, will be constructed in the following inductive procedure.

On step $N\ge 0$ we have a set $\{b_\omega\colon \omega\in\mathcal
W_N\}$ of positive numbers, a set $\{\theta_\omega\colon \omega\in\mathcal
W_N\}$ of real numbers and a rational function
$$
\varphi_N(z)=(z-1)+
\sum_{\omega\in\mathcal W_N}(-1)^{\sign(\omega)}(i\lambda)^{|\omega|}
F_{b_\omega}(ze^{-i\theta_\omega}-1)
$$
such that
$$
\varphi_N(\mathbb D)\subset\mathbb C_L\cup \bigcup_{\omega\in\mathcal W_N}\Omega_\omega
$$
and for every $\omega\in\mathcal W_N$, $x\in I_\omega$,
$$
\dist(x,\varphi_N(\mathbb D))<\frac{\varepsilon\lambda^{|\omega|}}{100}.
$$

Given $\omega\in\mathcal W$ and $b>0$, set
\begin{gather*}
O_{\omega,b}:=\{z\in\mathbb D\colon\Re(ze^{-i\theta_\omega})>1-b\},\\
d_\omega:=\exp\Bigl(-\frac{3}{b^2_\omega}\Bigr),\\
U_\omega=\overline{O_{\omega,d_{\widetilde\omega}}}\setminus
(O_{\omega\cdot1,b_{\omega\cdot1}}\cup O_{\omega\cdot0,b_{\omega\cdot0}}).
\end{gather*}
We have
\begin{equation}
\Re\bigl(\varphi'_N(z)(-1)^{\sign(\omega)}i^{|\omega|}\bigr)>\frac 12,\qquad z\in U_\omega,\, \omega\in\mathcal W_N\setminus\{\mathfrak e\}.
\label{e7}
\end{equation}
%for $z\in U_\omega$, $\omega\in\mathcal W_N$ and $\omega\ne\mathfrak e$.
If
$\omega{\cdot}1\not\in \mathcal W_N$, then we define
$U_\omega=\overline{O_{\omega,d_{\widetilde\omega}}}\setminus
O_{\omega\cdot0,b_{\omega\cdot0}}$, and make an analogous modification if
$\omega{\cdot}0\not\in \mathcal W_N$ or if $N=1$.

Furthermore, if $N\ge 2$, then
\begin{equation}
\Re(\varphi'_N(z))>\frac 12,\qquad z\in U_{\mathfrak e}=
\overline{\mathbb D}\setminus(O_{1,b_1}\cup O_{0,b_0}).
\label{e8}
\end{equation}
Thus, $\varphi_N$ is univalent on every set $U_\omega$, $\omega\in\mathcal
W_N$.

Next, if $\omega_1,\omega_2\in\mathcal W_N$, $\omega_1\not=\omega_2$,
$\omega_1\not=\omega_2^*$, $\omega_2\not=\omega_1^*$, $x_1\in U_{\omega_1}$,
$x_2\in U_{\omega_2}$, then
\begin{equation}
|\varphi_N(x_1)-|\varphi_N(x_2)|> A\lambda^{\min(|\omega_1|,|\omega_2|)},
\label{e9}
\end{equation}
for some absolute constant $A>0$.

If $\omega,\omega{\cdot}s\in\mathcal W^N$ for some $s\in\{0,1\}$,
$x_1\in \overline{U_{\omega}\setminus U_{\omega{\cdot}s}}$, $x_2\in \overline{O_{\omega{\cdot}s,b_{\omega{\cdot} s}}}$, then
\begin{equation}
|\varphi_N(x_1)-|\varphi_N(x_2)|> d_\omega.
\label{e10}
\end{equation}
As a consequence, $\varphi_N$ is univalent on $\mathbb D$.
The case $N=1$ is treated in a similar way.

Let $\omega_k=\widetilde\omega_N$. Without loss of generality assume that
$\omega_N=\omega_k\cdot1$. Set $I=I_{\omega_k}=[z,z+\zeta]$. Choose
$\theta_{\omega_N}>\theta_{\omega_k}$ such that the projection of
$\varphi_N(e^{i\theta_{\omega_N}})$ onto $I$ is $z+(1-\varepsilon)\zeta$, and set
$$
\varphi_{N+1}(z)=\varphi_N(z)+(-1)^{\sign(\omega_N)}(i\lambda)^{|\omega_N|}F_b(ze^{-i\theta_{\omega_N}}-1).
$$
Then by Lemma~\ref{le2}~(a), (c), and (d),
$$
\varphi_{N+1}(\mathbb D)\subset\mathbb C_L\cup \bigcup_{\omega\in\mathcal W^{N+1}}\Omega_\omega
$$
and for every $x\in I_\omega$, $\omega\in \mathcal W_{N+1}$,
$$
\dist(x,\varphi_{N+1}(\mathbb D))<\frac{\varepsilon\lambda^{|\omega|}}{100}
$$
for sufficiently small positive $b<2^{-n}$.

Furthermore, for sufficiently small $b$, inequalities \eqref{e7}, \eqref{e8}
hold for $\varphi_{N+1}$ and for $\omega\in \mathcal W_{N+1}$. Here we use
Lemma~\ref{le2}~(a) for $\omega\not=\omega_N$ and Lemma~\ref{le2}~(b),(f) for
$\omega=\omega_N$. Next, for sufficiently small $b$, inequalities \eqref{e9},
\eqref{e10} hold for $\varphi_{N+1}$ and for $\omega\in \mathcal W_{N+1}$.
Once again, we use Lemma~\ref{le1}~(d) for $\omega_1,\omega_2,\omega{\cdot}s\in\mathcal W_N$ and
Lemma~\ref{le2}~(a), (f), and (g) otherwise.
This completes the induction step.

\subsubsection*{Step~V.~Limit map.}

Passing to the limit $N\to\infty$ we obtain a univalent function $\varphi$ on
the unit disc such that
\begin{gather*}
\varphi(z)=(z-1)+\sum_{k\ge 1}\frac{c_k}{z-w_k},\\
\sum_{k\ge 1}\bigl(|c_k|+(|w_k|-1)\bigr)<\infty,\\
\varphi(\mathbb D)\subset\mathbb C_L\cup \Omega.
\end{gather*}
We have used here Lemma~\ref{le2}~(e).
Finally, for every $x\in\mathcal H_\infty$ there exists a path
$\gamma:[0,1)\to\mathbb D$ such that $x=\lim_{t\to 1}\varphi(\gamma(t))$, and
hence, $\mathcal H_\infty\subset\partial_a\varphi(\mathbb D)$.

\subsubsection*{Step~VI.~Dimension $2$.}

We choose a sequence of points on $\mathbb T$, say $\zeta_k=\exp(2^{-k}i)$ and $\varepsilon_k=2^{-10k}$. Set $\lambda_k=2^{-1/2}-\varepsilon_k$.
Next, we associate to every $\zeta_k$ a copy of $\mathcal W$ ordered as on Step IV, $\mathcal W^{(k)}=\{\omega^{(k)}_n\}_{n\ge 0}$.
Furthermore, we order the union of $\mathcal W^{(k)}$, $k\ge 1$, in a natural way: $\omega^{(1)}_0,\omega^{(1)}_1,\omega^{(2)}_0,\omega^{(1)}_2,\omega^{(2)}_1,\omega^{(3)}_0,\ldots$\,.
Now, using this ordering we construct the corresponding functions
$$
\varphi_{j,k}(z)=(z-1)+
\sum_{\omega^{(p)}_s\le \omega^{(j)}_k}(-1)^{\sign(\omega^{(p)}_s)}2^{-10p}(i\lambda_p)^{|\omega^{(p)}_s|}
F_{b_{\omega^{(p)}_s}}(ze^{-i\theta_{\omega^{(p)}_s}}-1).
$$
As on Step IV one verifies that $\varphi_{j,k}$ are conformal maps for suitable $\omega^{(p)}_s$ close to $\zeta_p$ and for sufficiently small $b_{\omega^{(p)}_s}$.
The limit univalent function satisfies the properties established on Step V and $\dim_H(\partial_aG)=2$.
\end{proof}

\section{Proof of Theorem~\ref{t2}}
\label{section4}

We use the construction of an $H$-tree described in the proof of
Theorem~\ref{t1} and some other notations from that proof. We suppose that $\dim_H(\mathcal H_\infty)$ is a fixed number in the interval $[1,2]$.

Applying a linear change of variables we can assume that $0\in\Omega\subset\mathbb D$.
A simple topological argument shows that there exists a $C^2$-smooth injective map $\gamma_0$
from the half-strip
$$
S=\{x+iy\in\mathbb C:x\ge0,\,|y|\le 1\}
$$
into $\mathbb D$ such that $\gamma_0=0$, $\gamma_0(S)\subset\Omega$ and $\gamma_0(S)\cap\Omega_\omega\not=\emptyset$ for every $\omega\in\mathcal W$. Changing, if necessary, the parametrization, we can assume that $\gamma=\gamma_0\,|\,\mathbb R_+$ satisfies the condition $|\gamma'(t)|=1$.

Then $\mathcal H_\infty\subset \overline{\gamma(\mathbb R_+)}$.

Choose a continuous function $\beta:\mathbb R_+\to(0,1)$ such that
$$
D(\gamma(t),100\beta(t))\subset\gamma_0(S), \qquad t\in[0,\infty).
$$
Set
$$
\delta(x)=\max_{[0,x]}\Bigl(\frac1{\beta}+|(\arg(\gamma'))'|\Bigr).
$$

Let $T>\max(100,\delta(1))$. Set $b_0=0$, $r_1=[2T\log T]+1$, $\rho=1-T^{-1}$. Next, for $n\ge 1$ we set
\begin{align*}
b_n&=b_{n-1}+\rho^{r_n},\\
r_{n+1}&=\begin{cases} r_n\text{\ \ if\ \ }\delta\bigl(b_n+T\rho^{r_n}\bigr)T\rho^{r_n}<1,\\
r_n+1 \text{\ \ otherwise}.
\end{cases}
\end{align*}

Then $b_n\nearrow\infty$ and $T\delta(b_n)(b_n-b_{n-1})<1$, $n\ge 1$.

Set $Q=T^{1/12}$, $\varepsilon=T^{-1/2}$, $w_n=\gamma(b_n)$, $a_n=w_{n+1}-w_n$, $n\ge 0$.
Then
\begin{enumerate}
\item[\rm(a)] $w_0=0$, $|w_n|<1$,
\item[\rm(b)] $1-\varepsilon<|a_{n+1}|\big/|a_n|<1+\varepsilon$,
\item[\rm(c)] $|\arg{a_{n+1}\overline{a_n}}|\le\varepsilon$,
\item[\rm(d)] $|a_n|\ge 2^{-n}T^{-2}$, $n\ge 0$.
\end{enumerate}

Define
$$
Q^{\pm}_n:=\conv\{w_n,w_{n+1}, w_n\pm 2ia_n, w_{n+1}\pm 2ia_{n}\},
$$
where $\conv\{A\}$ stands for the convex hull of $A$, and
$$
T^{\pm}_n=\conv\{w_{n+1}, w_{n+1}\pm2ia_{n}, w_{n+1}\pm2ia_{n+1}\}.
$$
Then $Q^{\pm}_n\cap Q^{\pm}_m=\varnothing$, and
$Q^{\pm}_n\cap T^{\pm}_m=\varnothing$ for $|n-m|>1$.
Furthermore, for sufficiently large $T$,
$$
\bigcup_n Q^{\pm}_n\cup T_n^{\pm}\subset \Omega.
$$

\begin{proposition}\label{mainprop}
There exists a meromorphic function $f$ with poles on the imaginary axis
which is univalent in the upper half plane $\mathbb C_+$ and such that $f(\mathbb
C_+)\subset\Omega$, $f(\mathbb R)\cap \gamma_0(x+i[-1,1])\not=\emptyset$, $x\ge x_0$.
\end{proposition}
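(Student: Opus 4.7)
The plan is to build $f$ as the limit of a snake-type inductive procedure in the same spirit as Step~IV of the proof of Theorem~\ref{t1}, now carried out in the upper half-plane with a target snake assembled from the blocks $Q^{\pm}_n, T^{\pm}_n$. I first glue these blocks, using properties (a)--(d) of $a_n=w_{n+1}-w_n$ (comparable consecutive sizes, small turning angle, non-overlap of non-adjacent blocks), into a simply connected Jordan subdomain $D\subset\Omega$ that follows $\gamma$ tubularly; since $\gamma$ is arc-length parametrized and $b_n\nearrow\infty$, the snake $D$ meets every slice $\gamma_0(x+i[-1,1])$ for $x\ge x_0$.

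Next I would prove a half-plane analogue of Lemma~\ref{le2}: for every small $b>0$ there is a rational function
\[
\widetilde F_b(z)=\sum_k \frac{c_k}{z-i\mu_k},\qquad c_k>0,\ \mu_k<0,
\]
whose poles all lie on the negative imaginary axis (outside $\overline{\mathbb{C}_+}$) and for which $z\mapsto z+\widetilde F_b(z)$ is univalent in $\mathbb{C}_+$, sending $\mathbb{C}_+$ onto itself together with a thin needle of unit length emerging vertically from the origin, with analogues of estimates (a)--(f) from Lemma~\ref{le2}. The construction is a direct transcription: start from $\widetilde G(z)=(b^2/2)\int_{bN^{-2}}^b dt/(z-it)$, verify the analogues of \eqref{ft3}--\eqref{ft6}, then apply the Simpson discretization \eqref{e5}--\eqref{e6}.

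Then $f=\lim_N f_N$ is built inductively, adding one needle per block of $D$. At stage $N$, one has $f_N(z)=z+\sum_{n\le N} e^{i\theta_n}\lambda_n\,\widetilde F_{b_n}(\sigma_n(z))$, where $\sigma_n$ is a real affine automorphism of $\mathbb{C}_+$ placing the base of the $n$-th needle at an appropriate boundary point where $f_{n-1}$ lands near the edge of the $n$-th block. The induction step adds a single scaled and rotated copy of $\widetilde F_{b_{N+1}}$ with $b_{N+1}$ sufficiently small to extend the image into the $(N+1)$-st block of $D$. The inductive hypotheses---(i) $f_N$ is univalent in $\mathbb{C}_+$ and meromorphic on $\mathbb{C}$, (ii) $f_N(\mathbb{C}_+)\subset\Omega$, (iii) blocks $1,\ldots,N$ have been traversed---are preserved by the same estimates as in Step~IV of Theorem~\ref{t1}: positivity of $\Re f_N'$ in the rotated frame on analogues of the sets $U_\omega$, separation bounds like \eqref{e9} and \eqref{e10}, and summability of $|\lambda_n|b_n$ for convergence to a meromorphic limit.

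The main obstacle will be to align all poles of $f$ onto the single line $i\mathbb{R}$ even though the successive needle bases are scattered along $\mathbb{R}$: the affine precomposition $\sigma_n(z)=(z-\xi_n)/s_n$ moves the pole set $\{i\mu_k\}$ to $\{\xi_n+is_n\mu_k\}$, a vertical line through $\xi_n$ rather than $i\mathbb{R}$. My plan to overcome this is to either (a) replace $\widetilde F_b$ by a more flexible variant in which the pole locations are an input parameter and can be prescribed independently on $i\mathbb{R}_-$ for each base point, exploiting the freedom afforded by Lemma~\ref{le2}(e) (only the total mass $\sum(c_k+|w_k|)$ is constrained in the basic construction), or (b) apply a single global Möbius automorphism of the source variable after the construction, chosen to collapse all the vertical lines of pole accumulation back onto $i\mathbb{R}$. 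Either way, the resulting $f$ is univalent meromorphic in $\mathbb{C}_+$ with poles on $i\mathbb{R}$, and its trace $f(\mathbb{R})$ visits every slice $\gamma_0(x+i[-1,1])$, $x\ge x_0$, because the target snake $D$ does.
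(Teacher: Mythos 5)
Your plan has two genuine gaps, and the second of them you already half-identify but do not actually close. First, the structural one: your ansatz $f_N(z)=z+\sum_{n\le N}e^{i\theta_n}\lambda_n\widetilde F_{b_n}(\sigma_n(z))$ cannot satisfy the inductive hypothesis (ii) $f_N(\mathbb C_+)\subset\Omega$, because after the normalization $0\in\Omega\subset\mathbb D$ the target is bounded while the leading term $z$ makes $f_N(\mathbb C_+)$ unbounded. This is not a cosmetic issue: in the disc setting of Theorem~\ref{t1} the base domain is bounded and ``identity plus needles'' makes sense, but here the infinite boundary $\mathbb R$ of $\mathbb C_+$ must be compressed into the bounded snake, so a perturbation-of-the-identity, one-needle-per-step scheme cannot work. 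The paper's proof of Proposition~\ref{mainprop} is of a different nature: it writes down a single explicit series, $f(z)=\int_0^z\sum_{n\ge1}a_niy_n(\zeta+iy_n)^{-2}\,d\zeta$ with lacunary $y_n=Q^n$, so that the exponentially long interval $[y_n,y_{n+1}]$ is sent near $w_{n+1}$ into $Q^-_n\cup T^-_n\cup Q^-_{n+1}$ (Lemmas~\ref{l1} and~\ref{l3}), and univalence comes from the global estimate $\Re(\overline{a_n}f'(x))>0$ on $[y_n,y_{n+2}]$ (Lemma~\ref{l2}), not from an induction over needles.

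Second, the pole-alignment obstacle is real and neither of your remedies resolves it. Remedy (b) is impossible: an automorphism of $\mathbb C_+$ is a real M\"obius map, it carries each vertical line to a circle or line, and by injectivity two disjoint vertical lines of poles cannot both be sent into $i\mathbb R$ (each image would have to be the entire imaginary axis). Remedy (a) conflicts with the mechanism of Lemma~\ref{le2}: the needle emanating from the base point is created precisely by poles clustered within distance $O(b)$ of that base, and a sum of simple fractions whose poles are confined to $i\mathbb R_-$ produces its boundary excursions only near real points of modulus comparable to the pole moduli; property (e) of Lemma~\ref{le2} controls total mass, not location, and gives no freedom to decouple the needle base on $\mathbb R$ from the pole positions. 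Pushing remedy (a) through honestly forces pole moduli comparable to the base abscissas, i.e.\ essentially the paper's lacunary placement at $-iQ^n$ — a placement which, moreover, is exactly what the proof of Theorem~\ref{t2} needs later, since $F_0$ is built from $S(z)=\prod_{n\ge1}(1+iz/Q^n)$ to cancel those poles and put $F=f(1-F_0)$ into the Paley--Wiener space. Your topological gluing of the blocks $Q^{\pm}_n$, $T^{\pm}_n$ along $\gamma$ is fine, but as written the analytic construction does not yield a univalent $f$ with $f(\mathbb C_+)\subset\Omega$ and poles on the imaginary axis.
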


\begin{proof}
Set $y_n=Q^n$, $y_{-n}=-Q^n$, $n\ge 1$,
$$
H(z)=\sum_{n=1}^\infty\frac{a_n i y_n}{(z+iy_n)^2},
\qquad f(z)=\int_0^zH(\zeta)d\zeta.
$$

\begin{lemma}\label{l1}
If $n\ge 1$, $y_n\leq x\leq y_{n+1}$, then $f(x)\in Q^{-}_n\cup T^{-}_n\cup
Q^{-}_{n+1}$.
\end{lemma}

\begin{proof}
We have
\begin{multline*}
f(x)=\sum_{k=1}^{n-2}\biggl(\frac{-ia_ky_k}{x+iy_k}+a_k\biggr)+
\sum_{k=n-1}^{k=n+2}\biggl(\frac{-ia_ky_k}{x+iy_k}+a_k\biggr)\\+
\int_0^x\sum_{k=n+3}^\infty\frac{ia_ky_k}{(t+iy_k)^2}\,dt=I_1+I_2+I_3.
\end{multline*}
Furthermore,
$$
I_1=w_{n-1}+\sum_{k=1}^{n-2}\frac{-ia_ky_k}{x+iy_k}=w_{n-1}+J_1,
$$
where
$$
|J_1|\le
\frac{1}{|x|}\sum_{k=1}^{n-2}|a_k|y_k\le
|a_n|\frac{(1-\varepsilon)^{-2}}{Q^2}\sum_{l=0}^\infty\biggl(\frac{(1-\varepsilon)^{-1}}{Q}\biggr)^l
\le\frac{2}{Q^2}|a_n|.
$$
On the other hand,
$$
|I_3|\le |x|\sum_{k=n+3}^\infty\frac{|a_k|}{y_k}\le\frac{2}{Q^2}|a_n|.
$$
It remains to estimate $I_2$. We have $x=\alpha y_n$ for some $\alpha\in[1,Q]$. Then
\begin{multline*}
I_2=-\frac{a_{n-1}iy_{n-1}}{x+iy_{n-1}}+a_{n-1}-\frac{a_{n}iy_{n}}{x+iy_{n}}+a_n\\
-\frac{a_{n+1}iy_{n+1}}{x+iy_{n+1}}+a_{n+1}-\frac{a_{n+2}iy_{n+2}}{x+iy_{n+2}}+a_{n+2}=\\=
a_n\Bigl(-\frac{i}{Q\alpha+i}+1 -\frac{i}{\alpha+i}+1-\frac{Qi}{\alpha+Qi}+1-\frac{Q^2i}{\alpha+Q^2i}+1\Bigr)
+R_n\\=J_2+R_n,
\end{multline*}
where $|R_n|\le 100\varepsilon|a_n|$. So,
$$
f(x)=w_{n+1}+a_n\biggl(-\frac{i}{Q\alpha+i}-\frac{i}{\alpha+i}+\frac{\alpha}{\alpha+Qi}+
\frac{\alpha}{\alpha+Q^2i}\biggr)+S_n,
$$
where
$$
|S_n|\le|a_n|(4Q^{-2}+100\varepsilon)\le 5|a_n|Q^{-2}.
$$
We conclude that $f(x)\in Q^{-}_n\cup T^{-}_n\cup Q^{-}_{n+1}$.
\end{proof}

\begin{lemma}\label{l2}
If $n\ge 1$, $y_n\le x\le y_{n+2}$, then
$\Re(\overline{a_n}f'(x))>0$.
\end{lemma}

\begin{proof}
We have
\begin{multline*}
f'(x)=H(x)=\sum_{k=1}^\infty\frac{ia_ny_n}{(x+iy_n)^2}=
\sum_{k=1}^{n-3}\frac{ia_ny_n}{(x+iy_n)^2}+\sum_{k=n+5}^\infty\frac{ia_ny_n}{(x+iy_n)^2}+\\
i\sum_{k=n-2}^{n+4}\frac{a_ky_k(x^2-y^2_k)}{(x^2+y^2_k)^2}+\sum_{k=n-2}^{n+4}\frac{2a_kxy^2_k}{(x^2+y_k^2)^2}
=\Sigma_1+\Sigma_2+\Sigma_3+\Sigma_4.
\end{multline*}
The assertion of the lemma is a direct consequence of the
following estimates:
\begin{gather*}
|\Sigma_1|\le\frac{1}{|x|^2}\sum_{k=1}^{n-3}|a_k|y_k\le\frac{2|a_n|}{Q^3|x|},\\
|\Sigma_2|\le\sum_{k=n+5}^\infty\frac{|a_k|}{y_k}\le\frac{2|a_n|}{Q^3|x|},\\
|\Re(\overline{a_n}\Sigma_3)|\le 1000\varepsilon|a_n|^2\frac{1}{|x|}\le\frac{|a_n|^2}{Q^3|x|},\\
\Re(\overline{a_n}\Sigma_4)\ge\frac{|a_n|^2}{Q^2|x|}.
\end{gather*}
\end{proof}

In a similar way we obtain

\begin{lemma} \label{l3}
\begin{enumerate}
\item[\rm(a)] If $n\ge 1$, $y_{-n-1}\le x\le y_{-n}$, then $f(x)\in Q^+_n\cup T^+_n\cup Q^+_{n+1}$.
\item[\rm(b)] If $n\ge 1$, $y_{-n-2}\le x\le y_{-n}$, then $\Re(\overline{a_n}f'(x))<0$.
\item[\rm(c)] If $0\le x\le y_1$, then $\Re(\overline{a_1}f'(x))>0$, $f(x)\in Q^-_0\cup T^-_0\cup Q^-_{1}$.
\item[\rm(d)] If $y_{-1}\le x\le 0$, then $\Re(\overline{a_1}f'(x))<0$, $f(x)\in Q^+_{-1}\cup T^+_{-1}\cup Q^+_{0}$.
\item[\rm(e)] If $n\ge 1$, $z\in\mathbb C_+$, and $|z|=y_n$, then $f(z)\in Q_n^+\cup Q_n^-$.
\end{enumerate}
\end{lemma}

Lemmas \ref{l1}--\ref{l3} together imply Proposition~\ref{mainprop}.
\end{proof}

\begin{proof}[Proof of Theorem~\ref{t2}]
The estimates in the proof of Lemma~\ref{l2} show that the function $f$ constructed in
Proposition~\ref{mainprop} satisfies the estimates
\begin{equation}
|f(x)-f(y)|\ge\frac{|x-y|}{Q^{25}(1+ |x|)^2},\qquad x,y\in[y_n,y_{n+2}],
\label{ft1}
\end{equation}
for $n\in\mathbb Z\setminus\{-2,-1,0\}$.
Furthermore,
\begin{equation*}
|f(x)-f(y)|\ge Q^{-25}|x-y|,\quad y_{-2}\le x\le y\le y_2.
\end{equation*}

For large $Q$ we can find a function $F_0$ in $\mathcal{P}W^{\infty}_{[0,\pi]}$ such that
\begin{equation}
|F_0(x)|\le Q^{-1},\qquad |F'_0(x)|\le \frac{1}{Q^{30}(1+|x|)^2}, \qquad x\in\mathbb R,
\label{nee}
\end{equation}
and $F_0(-iQ^n)=1$ as $n\ge1$.
Indeed, denote
\begin{gather*}
S(z)=\prod_{n\ge 1}\Bigl(1+\frac{iz}{Q^n}\Bigr),\\
R(z)=e^{i(\pi/2)z}\sin\Bigl(\frac{z}2\Bigr)\cdot \prod_{n\ge 1}\Bigl(1-\frac{z^2}{4\pi^2Q^{2n}}\Bigr)^{-1}.
\end{gather*}
Then
\begin{gather*}
\log|S(z)|\sim\frac{(\log(2+|z|))^2}{2\log Q}, \qquad \dist(z,\{-iQ^n\})>1,\\
\log|S'(x)|\sim\frac{(\log x)^2}{2\log Q},\qquad |x|\to\infty,\\
\log|S'(-iQ^n)|\sim\frac{n^2}2\log Q,\qquad n\ge 1,\\
\log|R(-iQ^n)|\sim \pi Q^n,\qquad n\ge 1,\\
\max\bigl(\log|R(x)|,\log|R'(x)|\bigr)\le O(1)-\frac{(\log(2+|x|))^2}{\log Q},\qquad x\in\mathbb R,
\end{gather*}
where $A(u)\sim B(u)$ means that $\lim_{u\to\infty}A(u)/B(u)=1$.
It remains to set
$$
F_0(z)=R(z)\cdot\sum_{n\ge 1}\frac{i}{Q^nS'(-iQ^n)R(-iQ^n)}\cdot\frac{S(z)}{1+izQ^{-n}}.
$$
Estimate \eqref{nee} holds for sufficiently large $Q$.

\begin{lemma} \label{l4}
Let $n\ge 1$. If $y_n\le x\le y_{n+1}$, then $f(x)(1-F_0(x))\in Q^{-}_n\cup T^{-}_n\cup
Q^{-}_{n+1}$. If $y_{-n-1}\le x\le y_{-n}$, then $f(x)(1-F_0(x))\in Q^{+}_n\cup T^{+}_n\cup
Q^{+}_{n+1}$. Finally, if $y_{-1}\le x\le y_1$, then $f(x)(1-F_0(x))\in
Q^-_0\cup T^-_0\cup Q^-_{1}\cup Q^+_{-1}\cup T^+_{-1}\cup Q^+_{0}$.
\end{lemma}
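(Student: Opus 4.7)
The plan is to view $f(x)(1-F_0(x)) = f(x) - f(x)F_0(x)$ as a controlled perturbation of $f(x)$, invoke the containment results of Lemmas~\ref{l1} and~\ref{l3}, and argue that the perturbation $|f(x)F_0(x)|$ is too small to escape the relevant quadrilateral union.

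First I would sharpen Lemmas~\ref{l1} and~\ref{l3} into the quantitative statement that $f(x)$ lies strictly inside $Q^{\pm}_n\cup T^{\pm}_n\cup Q^{\pm}_{n+1}$ with a margin that, measured relative to the scale $|a_n|$, is bounded below by a universal constant. This is immediate from the explicit expression $f(x) = w_{n+1} + a_n u(\alpha) + S_n$ with $|S_n|\le 5|a_n|Q^{-2}$ produced in the proof of Lemma~\ref{l1}: after centering at $w_{n+1}$ and rescaling by $a_n$, the curve $\alpha\mapsto u(\alpha)$ for $\alpha\in[1,Q]$ lies in a fixed, $n$-independent interior region of the rescaled union, and the error $S_n$ is negligible for large $Q$. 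A symmetric statement holds for negative $x$.

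Second I would bound the perturbation $|f(x)F_0(x)|$ against this margin. Since $f(\mathbb{C}_+)\subset\Omega\subset\mathbb{D}$ we have $|f(x)|\le 1$, so the uniform bound $|F_0(x)|\le Q^{-1}$ from \eqref{nee} only yields $|f(x)F_0(x)|\le Q^{-1}$, which is weaker than $|a_n|\ge 2^{-n}Q^{-24}$ and is therefore insufficient on its own. The resolution is to invoke the pointwise decay of $F_0$ inherent in its construction: the super-polynomial decay $\log|R(x)|\le O(1) - (\log(2+|x|))^2/\log Q$ established during the definition of $F_0$ propagates through the interpolation-type series to give $|F_0(x)|\le Q^{-n^2+O(n)}$ for $|x|\sim Q^n$, which is overwhelmingly smaller than $|a_n|$. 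For the central case $x\in[y_{-1},y_1]$ one instead exploits that all relevant quadrilaterals $Q^{\pm}_0, T^{\pm}_0, Q^{\pm}_{\pm 1}$ lie within $O(T^{-2})=O(Q^{-24})$ of the origin, so $|f(x)|=O(Q^{-24})$ and the uniform bound $|F_0(x)|\le Q^{-1}$ already gives $|f(x)F_0(x)|\le Q^{-25}\ll |a_0|,\, |a_{-1}|$.

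The main obstacle is extracting the sharper pointwise decay of $|F_0(x)|$ at large $|x|$. The stated uniform bound $|F_0(x)|\le Q^{-1}$ is by itself too weak to compete with $|a_n|$ for large $n$; the argument must unpack the construction of $F_0$ via $R$ and $S$, using the super-polynomial decay of $R$ (balanced against the polynomial-like growth of $S$ and of the coefficients) to produce perturbation bounds that dominate $|a_n|$ uniformly in $n$.
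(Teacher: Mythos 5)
Your strategy is exactly the paper's: the paper proves Lemma~\ref{l4} in one line, by bounding $|f(x)F_0(x)|$ pointwise (by $y_n^{-3}$) and rerunning the computation from the proof of Lemma~\ref{l1} with this extra term absorbed into the error. However, your first step, the ``sharpening'' of Lemmas~\ref{l1} and~\ref{l3}, is false as stated. In the notation of the proof of Lemma~\ref{l1}, write $f(x)=w_{n+1}+a_nu(\alpha)+S_n$ with $u(\alpha)=-\frac{i}{Q\alpha+i}-\frac{i}{\alpha+i}+\frac{\alpha}{\alpha+Qi}+\frac{\alpha}{\alpha+Q^2i}$, $x=\alpha y_n$, $\alpha\in[1,Q]$. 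At $\alpha\asymp\sqrt{Q}$ one has $u(\alpha)\approx -2i/\sqrt{Q}$, so $f(x)$ passes within distance $\asymp |a_n|Q^{-1/2}$ of the vertex $w_{n+1}$, which lies on the boundary of $Q^-_n\cup T^-_n\cup Q^-_{n+1}$ (on the top edges $[w_n,w_{n+1}]$ and $[w_{n+1},w_{n+2}]$). Hence the interior margin of $f(x)$, measured in units of $|a_n|$, is only $\asymp Q^{-1/2}$, not bounded below by a universal constant, and it degenerates as $Q\to\infty$. This does not ruin the scheme, because the perturbations to be absorbed ($|S_n|\le 5|a_n|Q^{-2}$ and the tiny $|f(x)F_0(x)|$) are still much smaller than $Q^{-1/2}|a_n|$, but the margin must actually be computed (equivalently, one reruns the Lemma~\ref{l1} estimate with the extra error term, which is what the paper does) rather than asserted.

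Second, your case analysis leaves a range uncovered: for $|x|\sim Q^n$ with $n$ of moderate size (say $1\le n\le 10$) the super-polynomial bound $|F_0(x)|\le Q^{-n^2+O(n)}$ does not beat $|a_n|\asymp T^{-2}=Q^{-24}$ (for $n=2$ it gives only $Q^{-4+O(1)}$), while your central-interval argument is confined to $[y_{-1},y_1]$. What fills this gap is the pointwise smallness of $F_0$ already encoded in \eqref{nee}: since $F_0\to0$ on $\mathbb R$, integrating $|F_0'(x)|\le Q^{-30}(1+|x|)^{-2}$ gives $|F_0(x)|\le Q^{-30}(1+|x|)^{-1}$, hence (using $|f|\le1$) $|f(x)F_0(x)|\le Q^{-30}\ll Q^{-1/2}|a_n|$ in that range; combined with the decay of $R$ for large $|x|$ this is what stands behind the paper's one-line estimate $|f(x)F_0(x)|\le y_n^{-3}$. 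With these two repairs your argument coincides with the paper's proof.
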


\begin{proof}
We just use the estimate $|f(x)F_0(x)|\leq y^{-3}_n$ and the argument from the proof of Lemma~\ref{l1} to
get the result.
\end{proof}

\begin{lemma} \label{l8}
The function $F=f(1-F_0)$ is univalent in $\mathbb C_+$.
\end{lemma}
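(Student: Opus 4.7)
The plan is to deduce the univalence of $F=f(1-F_0)$ on $\mathbb{C}_+$ by reproducing the argument structure of Proposition~\ref{mainprop} with $f$ replaced by $F$, exploiting that $F_0$ is uniformly small in the upper half-plane. First, since the poles of $f$ at $-iQ^n$ lie in $\mathbb{C}_-$ and $F_0$ is entire, $F$ is holomorphic in $\mathbb{C}_+$. Because $F_0\in\mathcal{P}W^\infty_{[0,\pi]}\subset H^\infty(\mathbb{C}_+)$, the bound $|F_0(x)|\le Q^{-1}$ on $\mathbb{R}$ extends to $|F_0(z)|\le Q^{-1}$ on $\mathbb{C}_+$ by the maximum principle; by Bernstein-type inequalities for the Paley--Wiener space (or direct Cauchy estimates), $F'_0$ is likewise uniformly controlled, with the additional decay in $|x|$ coming from the specific structure of $F_0$.

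The proof of Proposition~\ref{mainprop} rests on three pillars: (a) the image of each real segment $[y_n,y_{n+1}]$ (and of its reflection) lies in the specific cell $Q^-_n\cup T^-_n\cup Q^-_{n+1}$ (Lemmas~\ref{l1} and~\ref{l3}(a),(c),(d)); (b) $\Re(\bar{a}_n f'(x))$ has the correct sign on every real interval (Lemmas~\ref{l2} and~\ref{l3}(b)--(d)); (c) the image of the upper semicircle $|z|=y_n$ lies in $Q^+_n\cup Q^-_n$ (Lemma~\ref{l3}(e)). Combined with the pairwise disjointness of cells of different indices, these give univalence via a topological/argument-principle argument applied between consecutive semicircles. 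For $F$, pillar (a) is exactly Lemma~\ref{l4}. For pillar (b), I would write $F'(x)=f'(x)(1-F_0(x))-f(x)F'_0(x)$: the first term keeps the sign of $f'(x)$ since $|F_0|\le Q^{-1}$, and the second is negligible compared to $\Re(\bar{a}_n f'(x))\gtrsim |a_n|^2/(Q^2|x|)$ because $|f(x)F'_0(x)|\le\diam(\Omega)\,Q^{-30}(1+|x|)^{-2}$ and $Q=T^{1/12}$ is taken large. For pillar (c), the estimate $|F(z)-f(z)|=|f(z)F_0(z)|\le Q^{-1}\diam(\Omega)$ on the semicircle $|z|=y_n$ together with the separation between cells forces $F(\{|z|=y_n\})\subset Q^+_n\cup Q^-_n$ as soon as this perturbation is smaller than the gap between cells of distinct indices.

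The main obstacle is pillar (c) for large $n$: the cells $Q^\pm_n$ shrink at least like $|a_n|\ge 2^{-n}T^{-2}$, while the uniform bound $Q^{-1}=T^{-1/12}$ on $F_0$ does not decay with $n$. To close this gap, one must use the explicit construction of $F_0$ in terms of the factors $S$ and $R$: the rapid decay of $R$ on $\mathbb{R}$ (faster than any power of $|x|$, thanks to the estimate $\log|R(x)|\le O(1)-(\log(2+|x|))^2/\log Q$) transfers to the semicircles $|z|=y_n$ in $\mathbb{C}_+$, with only a controlled loss coming from the exponential type, reproducing the estimate $|f(z)F_0(z)|\le y_n^{-3}$ that was invoked on the real line in the proof of Lemma~\ref{l4}. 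Once all three pillars are established for $F$, the same combinatorial/topological argument that proved univalence of $f$ in Proposition~\ref{mainprop} yields the injectivity of $F$ on $\mathbb{C}_+$.
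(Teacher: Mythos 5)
Your plan is not the paper's argument, and it has a genuine gap exactly at the point you flag yourself: the semicircle step (your pillar (c)). For the topological argument you want to run, you must show that $F=f(1-F_0)$ maps the half-circles $\{|z|=y_n\}\cap\mathbb C_+$ into $Q_n^+\cup Q_n^-$, which requires $|f(z)F_0(z)|$ to be small compared with the cell scale $|a_n|$ \emph{inside} $\mathbb C_+$. The only interior bound you actually justify is $|F_0(z)|\le Q^{-1}$ by the maximum principle, and this is hopeless: already for moderate $n$ one has $|a_n|\approx T^{-2}=Q^{-24}$, and $|a_n|$ decreases further with $n$, so the perturbation dwarfs the cells. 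Your proposed repair --- transferring the real-line decay of $R$ to the semicircles ``with only a controlled loss'' --- is not carried out and does not follow from anything stated in the paper: the estimates \eqref{nee} and the listed asymptotics for $S$, $R$ are boundary ($x\in\mathbb R$) estimates, and a Poisson/Phragm\'en--Lindel\"of transfer of a polynomial decay bound on $\mathbb R$ gives only $O(1/y_n)$ at height $y_n$ (e.g.\ at $z=iy_n$), which is still far above $|a_n|$ for $n\lesssim 24$. Closing this would require genuinely new interior estimates on the infinite products defining $S$ and $R$ (exploiting the factor $e^{i\pi z/2}\sin(z/2)$ to gain decay in $\Im z$), i.e.\ a substantial piece of analysis that neither you nor the paper provides. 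So, as written, the proof does not go through.

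The paper avoids all of this: it never estimates $F_0$ off the real axis. It reduces univalence to injectivity of $F$ on $\mathbb R$; if $F(x)=F(y)$ with $x<y$, Lemma~\ref{l4} together with the disjointness of the cells forces $y_n\le x<y\le y_{n+2}$ for some $n$, and then the identity $f(x)-f(y)=F_0(x)\bigl(f(x)-f(y)\bigr)+f(y)\bigl(F_0(x)-F_0(y)\bigr)$, the bounds $|F_0|\le Q^{-1}$ and $|F_0'(x)|\le Q^{-30}(1+|x|)^{-2}$ from \eqref{nee}, and the lower distortion bound \eqref{ft1} for $f$ on the window $[y_n,y_{n+2}]$ give an immediate contradiction (the exponents $25$ and $30$ are chosen precisely to absorb the factor $(y_{n+2}/y_n)^2=Q^4$). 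Note also that even inside your own framework the semicircle pillar is unnecessary: once you accept the reduction to boundary injectivity, your pillars (a) and (b) (cell containment on $\mathbb R$ plus the sign of $\Re(\overline{a_n}F'(x))$ on overlapping windows) already yield injectivity on $\mathbb R$; alternatively, replace pillar (b) by the quantitative bound \eqref{ft1} as the paper does. Either way, the step you identify as the main obstacle is the one step that should be avoided rather than repaired.
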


\begin{proof} It suffices to verify that $F$ is injective on $\mathbb R$.
If $F(x)=F(y)$, $x<y$, then, by Lemma~\ref{l4}, $y_n\le x<y\le y_{n+2}$
for some $n$. Since $F(x)=F(y)$, we have
$$
f(x)-f(y)=F_0(x)(f(x)-f(y))+f(y)(F_0(x)-F_0(y))=K_1+K_2.
$$
If $n\in\mathbb Z\setminus\{-2,-1,0\}$, then
\begin{gather*}
|K_1|\le |f(x)-f(y)|/2,\\
|K_2|\le |F_0(x)-F_0(y)|\le |x-y|\cdot|F'(\zeta)|,
\end{gather*}
for some $\zeta\in[y_n,y_{n+2}]$. Therefore,
$$
|K_2|\leq \frac{|x-y|}{Q^{30}(1+|\zeta|)^2},
$$
and we obtain a contradiction to \eqref{ft1}.

An analogous argument works for $y_{-2}\le x\le y\le y_2$.
\end{proof}

Finally, $F\in\mathcal{P}W^{\infty}_{[0,\pi]}$ and  $\dim_H(\partial F(\mathbb C_+))=\dim_H(\mathcal H_\infty)$ could be any number in the interval $[1,2]$.
\end{proof}

\section{Proof of Theorem~\ref{t0-2}}
\label{section5}

As mentioned after the statement of the theorem, we deal here just with the lower estimate.
It suffices to show that for some absolute constant $\alpha>0$ and for every integer
$N\ge 1$ there exists a rational function $f$ of
degree $N$ univalent in $\mathbb C_+$ and such that
$$
\int_{\mathbb R}|f'(x)|\,dx>\alpha\sqrt{N}\|f\|_{\infty,\mathbb C_+}.
$$

To find such a function we use the construction in
Proposition~\ref{mainprop} with finite number of points $w_n$. For $\beta>0$ set
$$
w_n=\biggl(1-\frac{n}{2N}\biggr)\exp(2\pi i\cdot \beta nN^{-1/2}),
\qquad 1\le n\le N-1.
$$
Direct calculations show that for sufficiently small $\beta=\beta_0$ the sequence $w_n$
satisfies all the properties necessary to proceed with the argument in Proposition~\ref{mainprop}. Finally,
$$
\int_{\mathbb R}|f'(x)|\,dx\gtrsim \sum_{1\le n< N-1}|w_{n+1}-w_n|\ge \tau\sqrt{N}
$$
and $\|f\|_{\infty,\mathbb{D}}\le C$ for some absolute $\tau>0$ and $C<\infty$ that completes the proof.


\begin{thebibliography}{999}

\bibitem{BarCarFed2016jat}%
A.~Baranov, J.~Carmona, K.~Fedorovskiy, \textit{Density of certain
polynomial modules}, J. Approx.\ Theory \textbf{206} (2016) 1--16.

\bibitem{BarFed2011msb}%
A.~Baranov, K.~Fedorovskiy, \textit{Boundary regularity of Nevanlinna
domains and univalent functions in model subspaces}, Sb. Math. \textbf{202}
(2011) 1723--1740.

\bibitem{BarFed2017jam}%
A.~Baranov, K.~Fedorovskiy, \textit{On $L^1$-estimates of derivatives
of univalent rational functions}, J. Anal.\ Math.\ \textbf{132} (2017), 63--80.

\bibitem{BelSmi2005ecm}%
D.~Beliaev, S.~Smirnov, \emph{Harmonic measure on fractal sets}, Proceedings
of the 4th European Congress of Mathematics, European Mathematical Society,
Z\"urich, 2005, pp.~41--59.

\bibitem{BelFed2018rms}%
Yu.~Belov, K.~Fedorovskiy, \textit{Model spaces containing univalent
functions}, Russian Math.\ Surv.\ \textbf{73} (2018) 172--174.

\bibitem{BoiGauPar2004izv}%
A.~Boivin, P.~Gauthier, P.~Paramonov, \textit{On uniform approximation
by $n$-analytic functions on closed sets in $\mathbb C$}, Izv.\ Math.\
\textbf{68} (2004) 447--459.

\bibitem{Car1985jat}%
J.~Carmona, \textit{Mergelyan's approximation theorem for rational
modules}, J. Approx.\ Theory \textbf{44} (1985) 113--126.

\bibitem{CarFed2005otaa}%
J.~Carmona, K.~Fedorovskiy, \textit{Conformal maps and uniform
approximation by polyanalytic functions}, Selected Topics in Compelx
Analysis, Oper.\ Theor.\ Adv.\ Appl.\ 158, Birkh\"auser, Basel, 2005,
pp.~109--130.

\bibitem{CarFed2008mn}%
J.~Carmona, K.~Fedorovskiy, \textit{On the Dependence of uniform
polyanalytic polynomial approximations on the order of polyanalyticity},
Math. Notes \textbf{83} (2008) 31--36.

\bibitem{CarFedPar2002msb}%
J.~Carmona, P.~Paramonov, K.~Fedorovskiy, \textit{On uniform
approximation by polyanalytic polynomials and the Dirichlet problem for
bianalytic functions}, Sb.\ Math.\ \textbf{193} (2002) 1469--1492.

\bibitem{Dav1974book}%
P.~Davis, \textit{The Schwarz function and its applications},
Carus Math.\ Monogr.\ 17, Math.\ Ass.\ of America, Buffalo, NY 1974.

\bibitem{DyaKha2006cm}%
K.~Dyakonov, D.~Khavinson, \textit{Smooth functions in star-invariant
subspaces}, Recent advances in operator-related function theory, Contemp.\
Math.\ 393, Amer. Math. Soc., Providence, RI 2006, pp.~59--66.

\bibitem{OTAA156}
P.~Ebenfelt, B.~Gustafsson, D.~Khavinson, M.~Putinar (eds.), Quadrature domains and their applications,
Oper. Theor. Adv. Appl., 156, Birkhauser, Basel, 2005.

\bibitem{Fal2003book}%
K.~Falconer, \textit{Fractal geometry. Mathematical foundations and applications. Second edition}, John Wiley \& Sons, Hoboken, NJ, 2003

\bibitem{Fed1996mn}%
K.~Fedorovskiy, \textit{On uniform approximations of functions by
$n$-analytic polynomials on rectifiable contours in $\mathbb C$}, Math. Notes
\textbf{59} (1996) 435--439.

\bibitem{Fed2001psim}%
K.~Fedorovskiy, \textit{Approximation and boundary properties of
polyanalytic functions}, Proc.\ Steklov Inst.\ Math.\ \textbf{235} (2001), no.~4,  251--260.

\bibitem{Fed2006psim}%
K.~Fedorovskiy, \textit{On some properties and examples of Nevanlinna
domains}, Proc.\ Steklov Inst.\ Math.\ \textbf{253} (2006), no.~2,  186--194.

\bibitem{Fed2018cvee}%
K.~Fedorovskiy, \textit{Two problems on approximation by solutions of
elliptic systems on compact sets in the plane}, Complex Var.\ Ellipt.\ Eq.\ \textbf{63} (2018) 961--975.

\bibitem{GarMar2005book}%
J.~Garnett, D.~Marshall, \textit{Harmonic measure}, New Math.\ Monogr.\ 2, Cambridge University Press, Cambridge 2005.

\bibitem{HedShi2005duke}%
H.~Hedenmalm, S.~Shimorin, \textit{Weighted Bergman spaces and the integral
means spectrum of conformal mappings}, Duke Math.\ J. \textbf{127} (2005) 341--393.

\bibitem{Kay2008mn}%
I.~Kayumov, \textit{On an inequality for the universal spectrum of integral means}, Math. Notes \textbf{84} (2008) 137--141.

\bibitem{Mat1995book}%
P.~Mattila, \textit{Geometry of sets and measures in Euclidean spaces},
Cambridge Studies in Adv.\ Math.\ 44, Cambridge University
Press, Cambridge 1995.

\bibitem{Maz1997mn}%
M.~Mazalov, \textit{An example of a nonconstant bianalytic function
vanishing everywhere on a nowhere analytic boundary}, Math. Notes \textbf{62} (1997) 524--526.

\bibitem{Maz2004msb}%
M.~Mazalov, \textit{On uniform approximations by bi-analytic functions on
arbitrary compact sets in $\mathbb C$}, Sb.\ Math.\ \textbf{195} (2004) 687--709.

\bibitem{Maz2008msb}%
M.~Mazalov, \textit{A criterion for uniform approximability on arbitrary
compact sets for solutions of elliptic equations}, Sb.\ Math.\ \textbf{199} (2008) 13--44.

\bibitem{Maz2016spmj}%
M.~Mazalov, \textit{An example of a nonrectifiable Nevanlinna contour}, St.\ Petersburg Math.\ J. \textbf{27} (2016) 625--630.

\bibitem{Maz2018spmj}%
M.~Mazalov, \textit{On Nevanlinna domains with fractal boundaries}, to appear in St.\ Petersburg Math.\ J. \textbf{29} (2018).

\bibitem{FedMazPar2012rms}%
M.~Mazalov, P.~Paramonov, K.~Fedorovskiy, \textit{Conditions for the $C^m$-approximability of functions by solutions of elliptic equations},
Russian Math.\ Surveys \textbf{67} (2012) 1023--1068.

\bibitem{Nik1980book}%
N.~Nikolski\u\i, \textit{Treatise on the shift operator}, Springer--Verlag, Berlin 1986.

\bibitem{Pom1992book}%
Ch.~Pommerenke, \textit{Boundary behaviour of conformal maps}, Springer--Verlag, Berlin 1992.

\bibitem{Sak1991acta}%
M.~Sakai, \textit{Regularity of a boundary having a Schwarz function}, Acta Math.\ \textbf{166} (1991) 263--297.

\bibitem{Sha1992book}%
H.~Shapiro, \textit{The Schwarz function and its generalization to higher
dimensions}, University of Arkansas Lecture Notes in the Mathematical Sciences 9, John Wiley \& Sons, Inc., New York 1992.

\bibitem{TreWan1981pams}%
T.~Trent, J.~Wang, \textit{Uniform approximation by rational modules on
nowhere dense sets}, Proc.\ Amer.\ Math.\ Soc.\ \textbf{81} (1981) 62--64.

\bibitem{Ver1993pjm}%
J.~Verdera, \textit{On the uniform approximation problem for the square of
the Cauchy-Riemann operator}, Pacific J. Math.\ \textbf{159} (1993) 379--396.

\bibitem{Zai2004izv}%
A.~Zaitsev, \textit{On the uniform approximability of functions by
polynomial solutions of second-order elliptic equations on planar compact
sets}, Izv.\ Math.\ \textbf{68} (2004) 1143--1156.
\end{thebibliography}
\end{document}